\documentclass[12pt]{amsart}

\usepackage{amsfonts,amsthm,amsmath,amssymb,amscd,mathrsfs,tikz}
\usepackage{graphics}
\usepackage{indentfirst}
\usepackage{cite}
\usepackage{latexsym}
\usepackage[dvips]{epsfig}
\usepackage{enumitem}
\usepackage{color}
\usepackage{amssymb,amsmath,bm}
\usepackage{calc}
\newtheorem{theorem}{Theorem}[section]
\newtheorem{remark}{Remark}[section]
\newtheorem{definition}{Definition}[section]
\newtheorem{lemma}[theorem]{Lemma}

\newtheorem{cor}[theorem]{Corollary}

\newcommand{\bt}{\begin{theorem}}
\newcommand{\bl}{\begin{lemma}}
\newcommand{\el}{\end{lemma}}
\newcommand{\et}{\end{theorem}}

\newcommand{\ka}{\kappa}

\newcommand{\bn}{\begin{eqnarray}}
\newcommand{\en}{\end{eqnarray}}
\newcommand{\bnn}{\begin{eqnarray*}}
\newcommand{\enn}{\end{eqnarray*}}

\newcommand{\ba}{\begin{aligned}}
\newcommand{\ea}{\end{aligned}}
\newcommand{\be}{\begin{equation}}
\newcommand{\ee}{\end{equation}}

\newcommand{\R}{\mathbb{R}}

\usepackage[T1]{fontenc}
\usepackage{lmodern}
\usepackage{amsmath,amssymb,mathtools}
\usepackage{microtype}
\usepackage[hidelinks]{hyperref}
\newtheorem{proposition}{Proposition}[section]
\numberwithin{equation}{section}

\DeclareMathOperator{\diver}{div}
\newcommand{\norm}[2]{\left\|#1\right\|_{#2}}
\newcommand{\avg}[2]{\left(#1\right)_{#2}}
\newcommand{\ep}{\varepsilon}
\newcommand{\ut}{\widehat{\Bu}}
\newcommand{\pt}{\widehat p}
\newcommand{\wt}{\widehat{\Bw}}
\newcommand{\cR}{\mathcal R}

\allowdisplaybreaks[1]
\newcommand{\Bf}{{\boldsymbol{f}}}
\newcommand{\Bv}{{\boldsymbol{v}}}

\newcommand{\Bn}{{\boldsymbol{n}}}
\newcommand{\Bt}{{\boldsymbol{\tau}}}
\newcommand{\Bu}{{\boldsymbol{u}}}

\newcommand{\Be}{{\boldsymbol{e}}}
\newcommand{\BF}{{\boldsymbol{F}}}

\newcommand{\Bvphi}{{\boldsymbol{\varphi}}}

\newcommand{\BU}{\boldsymbol{U}}

\newcommand{\BPsi}{{\boldsymbol{\Psi}}}

\newcommand{\Bw}{{\boldsymbol{w}}}
\newcommand{\BD}{{\boldsymbol{D}}}
\newcommand{\Ba}{{\boldsymbol{a}}}

\newcommand{\BZ}{{\boldsymbol{Z}}}
\newcommand{\BPhi}{{\boldsymbol{\Phi}}}
\newcommand{\Bpsi}{{\boldsymbol{\psi}}}
\begin{document}

\title[Pointwise decay of steady flows with Navier slip]{Pointwise decay of steady Navier--Stokes flows with Navier-slip boundary condition in two-dimensional channels}

  \author{Han Li}
\address{School of Mathematical Sciences, Soochow University, No. 1 Shizi Street, Suzhou, China}
\email{hli3@stu.suda.edu.cn}

		\author{Jingwen Han}
		\address{School of Mathematics and Statistics, Anhui Normal University, Wuhu, China}
		\email{mathjwh95@ahnu.edu.cn}

\keywords{Steady Navier--Stokes equations, Navier slip boundary condition,
 two-dimensional channels, pointwise decay, arbitrary flux}

\subjclass[2020]{%\AMSMOS
35Q30, 35J67, 76D05, 76D03}

\begin{abstract}
We establish a pointwise decay estimate for steady incompressible
Navier--Stokes flows in two-dimensional channels with  positive
Navier friction coefficient. Specifically, for the arbitrary-flux solutions constructed by Sha, Wang, and Xie \cite{SWX22f}, under suitable outlet
conditions and weighted geometric bounds on the channel walls, we show
\[
 |\Bu(x_1,x_2)|\le \frac{C}{f(x_1)},
\]
where $f$ is the channel width.  Our decay
rate is optimal in the cross-sectional supremum norm when the prescribed
flux is nonzero.    The main difficulty is that rescaling by
the local width changes the Navier friction coefficient into a
potentially large parameter. The key technique is to establish a local Stokes boundary estimate uniform in this
parameter.
\end{abstract}
\maketitle
\section{Introduction}

We consider steady incompressible viscous flow through a two-dimensional
channel whose width may increase at infinity. The domain is
\begin{equation}
 \Omega=\{x=(x_1,x_2)\in\R^2:
              f_1(x_1)<x_2<f_2(x_1)\},
 \label{intro:domain}
\end{equation}
where $f_1,f_2\in C^3(\R)$ and $f_1(x_1)<f_2(x_1)$ for every $x_1\in\R$. We write
\begin{equation}
 \begin{gathered}
 f(x_1)=f_2(x_1)-f_1(x_1),\qquad
 \bar f(x_1)=\frac{f_1(x_1)+f_2(x_1)}{2},\\
 \Sigma(t)=\{(t,x_2):f_1(t)<x_2<f_2(t)\}.
 \end{gathered}
 \label{intro:width}
\end{equation}
The velocity $\Bu=(u_1,u_2)$ and pressure $p$ satisfy
\begin{equation} \label{intro:NS}
 \begin{cases}
 -\Delta \Bu+\Bu\cdot\nabla \Bu+\nabla p=0 &\text{in }\Omega,\\
 \diver \Bu=0 &\text{in }\Omega,
 \end{cases}
\end{equation}
with the Navier slip boundary condition
\begin{equation} \label{intro:Navier}
 \Bu\cdot \Bn=0,\qquad [\BD(\Bu)\Bn]\cdot\Bt+\alpha \Bu\cdot\Bt=0
       \quad\text{on }\partial\Omega.
\end{equation}
Here $\BD(\Bu)=(\nabla \Bu+\nabla \Bu^{\mathsf T})/2$ is the symmetric velocity gradient, $\Bn$ is the outward
unit normal vector, $\Bt$ is a unit tangent vector, and $\alpha>0$ is a fixed
constant.  The flow is
prescribed by its flux,
\begin{equation}
 \int_{f_1(t)}^{f_2(t)}u_1(t,x_2)\,dx_2=\Phi,
 \label{intro:flux}
\end{equation}
which is independent of $t$ by incompressibility and impermeability.
We take $\Phi\ge0$, with no restriction on its size. Our purpose is
to determine how the velocity decays as the cross-section widens.

\begin{figure}[htbp]
\centering
\begin{tikzpicture}[x=1.05cm,y=0.72cm,>=stealth]
 \fill[gray!7] (-4,1.9) .. controls (-2.3,1.15) and (-1.4,0.65) .. (0,0.92)
 .. controls (1.6,1.15) and (2.6,1.6) .. (4,2.05)
 -- (4,-1.72) .. controls (2.6,-1.35) and (1.2,-0.95) .. (0,-0.82)
 .. controls (-1.4,-0.7) and (-2.4,-1.22) .. (-4,-1.6) -- cycle;
 \draw[thick] (-4,1.9) .. controls (-2.3,1.15) and (-1.4,0.65) .. (0,0.92)
 .. controls (1.6,1.15) and (2.6,1.6) .. (4,2.05);
 \draw[thick] (-4,-1.6) .. controls (-2.4,-1.22) and (-1.4,-0.7) .. (0,-0.82)
 .. controls (1.2,-0.95) and (2.6,-1.35) .. (4,-1.72);
 \draw[dashed] (0,-0.82)--(0,0.92);
 \node[right] at (0,0.37) {$\Sigma(t)$};
 \node at (-1.6,0) {$\Omega$};
 \node[above] at (2.8,1.85) {$x_2=f_2(x_1)$};
 \node[below] at (2.8,-1.5) {$x_2=f_1(x_1)$};
 \draw[->] (-3.6,0)--(-2.6,0) node[midway,above] {$\Phi$};
 \draw[->] (2,-3.35)--(3.6,-3.35) node[right] {$x_1$};
\end{tikzpicture}
\caption{The channel $\Omega$ and a cross-section $\Sigma(t)$.}
\label{intro:figure}
\end{figure}

The relation between prescribed flux and far-field behaviour is central
to the Leray problem for steady flow in unbounded channels. For straight
outlets, the classical problem seeks solutions approaching the
Poiseuille profiles with the prescribed flux. Foundational results for
the no-slip problem were obtained by Amick \cite{Amick77,Amick78} and
Ladyzhenskaya and Solonnikov \cite{LadySol80}; see \cite{Ga} for a
systematic reference. Quantitative spatial decay is also central to the
entry-flow problem: Horgan and Wheeler \cite{HorganWheeler78} derived
energy-based decay estimates for the development of flow in cylindrical
pipes. These studies connect control of the energy with the approach to
a fully developed flow.

When the outlet width varies, its geometry also enters the description
of the far field. Steady flows in channels of various types were studied
by Amick and Fraenkel \cite{AmickFraenkel80}, while Pileckas
\cite{Pileckas97} treated strong solutions in domains with exits to
infinity. More detailed asymptotic descriptions are available for
specific geometries, such as the paraboloidal outlets considered by
Nazarov and Pileckas \cite{NazarovPileckas98}. For a widening channel,
the flux constraint suggests a velocity scale of order $f(x_1)^{-1}$.
A pointwise bound at this scale describes the size of the flow without
requiring convergence to a prescribed asymptotic profile.

The direct antecedent in this direction is Li and Sha \cite{LS}.
For two-dimensional no-slip channels, they constructed arbitrary-flux
solutions and proved
$|\Bu(x)|\le C/f(x_1)$ \cite[Theorem~1.2]{LS}  under additional outlet assumptions. Their argument combines
local energy estimates on strips of width-dependent length with
rescaling and Stokes regularity. The resulting bound is uniform across
the cross-section and gives decay to zero along widening outlets.
It therefore provides an asymptotic conclusion for arbitrary flux,
without requiring small-flux uniqueness or identifying a limiting
velocity profile.

The Navier slip boundary condition couples tangential stress to tangential velocity
while retaining impermeability. Its role as an effective boundary law
for rough surfaces was studied by J\"ager and Mikeli\'c \cite{JM}.
Analytically, replacing the no-slip condition changes the boundary
estimates needed to convert energy control into pointwise bounds.
Regularity theory for Stokes systems with slip-type conditions was
developed by Beir\~ao da Veiga \cite{Beirao04} and Amrouche and Rejaiba
\cite{AmroucheRejaiba14}. Acevedo Tapia, Amrouche, Conca, and Ghosh
\cite{TACG} further studied Stokes and Navier--Stokes problems with
Navier slip boundary conditions, including estimates uniform in a constant friction
parameter. These results provide the boundary regularity framework
for the present analysis.

For steady channel flows, the distinction between zero and positive
friction is important. In the frictionless setting, Mucha
\cite{MuchaPipe03} obtained solutions in infinite pipes without a
small-flux restriction; his subsequent asymptotic analysis
\cite{Mucha03} quantified exponential convergence to constant end
states, with different inlet and outlet decay rates for nonzero flux.
Sha, Wang, and Xie \cite{SWXfull} also treated arbitrary-flux flows
with full slip. For positive friction coefficient, Li, Pan, and Yang
\cite{LPY2D,LPY3D} studied Leray's problem and convergence to Poiseuille
flows under flux restrictions; in their two-dimensional result, the
admissible flux range expands as friction decreases. Wang and Xie
\cite{WX3} established uniform structural stability in a straight pipe
for axisymmetric solutions. These results concern different geometric
or structural settings from the widening channels considered here.

Our starting point is the work of Sha, Wang, and Xie \cite{SWX22f}
on two-dimensional channels with positive friction and possibly
unbounded width. They constructed solutions with arbitrary flux and,
under additional outlet assumptions, obtained the local energy decay
estimates in Propositions~5.6 and~5.7. We use these estimates for their
constructed solutions, without invoking small-flux uniqueness.
The remaining task is to pass from integral control on local strips
to a pointwise bound uniform across each cross-section. Interior
regularity provides this passage away from the walls; the main issue
is to retain the same inverse-width rate up to a curved Navier  slip boundary.

The difficulty is that rescaling by the local width $r$ changes the
friction coefficient to $\lambda=\alpha r$, which may become arbitrarily
large. We adapt the constant-friction argument of
\cite[Theorem~4.5]{TACG} to a local two-dimensional Stokes estimate
allowing nonzero divergence and nonhomogeneous tangential boundary
data, with a constant independent of $\lambda$. Weighted bounds on
the wall derivatives control the geometric terms introduced by
flattening and the correction to the friction coefficient.
Interpolation and absorption then yield a uniform bound for the
rescaled velocity. This extends the inverse-width pointwise decay
mechanism of \cite{LS} to the arbitrary-flux solutions of
\cite{SWX22f}, including at the channel walls.

\subsection{Notation and weak solutions}
For $a<b$ and $T>0$, set
\[
 \Omega_{a,b}=\Omega\cap\{a<x_1<b\},\qquad
 \Omega_T=\Omega_{-T,T},\qquad
 \Gamma_{a,b}=\partial\Omega\cap\{a<x_1<b\}.
\]
We retain the notation for the energy and test spaces used in
\cite{SWX22f}. Define
\begin{align*}
 \mathcal H(\Omega_{a,b})
 &=\{\Bv\in H^1(\Omega_{a,b};\R^2):
       \Bv=0\text{ on }\Sigma(a)\cup\Sigma(b),\
       \Bv\cdot \Bn=0\text{ on }\Gamma_{a,b}\},\\
 \mathcal H_\sigma(\Omega_{a,b})
 &=\{\Bv\in\mathcal H(\Omega_{a,b}):\diver \Bv=0\},
\end{align*}
and
\begin{equation}
 H_\sigma(\Omega)=
 \left\{\Bu:\begin{array}{l}
 \Bu|_{\Omega_T}\in H^1(\Omega_T;\R^2)\text{ for every }T>0,\\
 \diver \Bu=0\text{ in }\Omega,\quad \Bu\cdot \Bn=0\text{ on }\partial\Omega
 \end{array}\right\}.
 \label{intro:energy-space}
\end{equation}
The local $H^1$ requirement in \eqref{intro:energy-space} includes
the physical boundary. Test fields in
$\mathcal H_\sigma(\Omega_T)$ are extended by zero outside $\Omega_T$.
For a bounded domain $D$, write
\[
 L^2_0(D)=\left\{q\in L^2(D):\int_Dq\,dx=0\right\}.
\]
We also use the zero-flux space
\begin{equation}
 H^1_*(\Omega_{a,b})=
 \left\{\Bv\in H^1(\Omega_{a,b};\R^2):
  \int_{f_1(t)}^{f_2(t)}v_1(t,x_2)\,dx_2=0
                \text{ for a.e. }t\in(a,b)\right\}.
 \label{intro:zero-flux-space}
\end{equation}

\begin{definition}[Weak solution]\label{wea:defwo}
A field $\Bu\in H_\sigma(\Omega)$ is a weak solution of
\eqref{intro:NS}--\eqref{intro:flux} if it has flux $\Phi$ and satisfies
\begin{equation} \label{intro:weak}
 \int_{\Omega_T}\bigl(2\BD(\Bu):\BD(\Bvphi)
                     +(\Bu\cdot\nabla \Bu)\cdot\Bvphi\bigr)\,dx
       +2\alpha\int_{\Gamma_{-T,T}}\Bu\cdot\Bvphi\,ds=0,
\end{equation}
for every $T>0$ and every
$\Bvphi\in\mathcal H_\sigma(\Omega_T)$.
\end{definition}

For a weak solution in Definition~\ref{wea:defwo}, pressure recovery on bounded channel segments gives a pressure $p$,
unique up to an additive constant, such that
$p|_{\Omega_T}\in L^2(\Omega_T)$ for every $T>0$ and
\begin{equation}
 \int_{\Omega_T}\bigl(2\BD(\Bu):\BD(\Bvphi)
                     +(\Bu\cdot\nabla\Bu)\cdot\Bvphi
                     -p\,\diver\Bvphi\bigr)\,dx
 +2\alpha\int_{\Gamma_{-T,T}}\Bu\cdot\Bvphi\,ds=0,
 \label{intro:full-weak}
\end{equation}
for every $\Bvphi\in\mathcal H(\Omega_T)$.
One may refer to
\cite[Theorem~III.5.3]{Ga}. 

\subsection{Geometric assumptions and the main result}
Throughout the paper, we assume
\begin{equation}
 d_*:=\inf_{t\in\R}f(t)>0,\qquad
 \beta:=\max_{i=1,2}\norm{f_i'}{L^\infty(\R)}<\infty.
 \label{intro:basic-geometry}
\end{equation}
For the boundary estimates, we assume
\begin{equation}
 K_2:=\max_{i=1,2}\sup_{t\in\R}f(t)^{3/2}|f_i''(t)|<\infty,
 \qquad
 K_3:=\max_{i=1,2}\sup_{t\in\R}f(t)^2|f_i'''(t)|<\infty.
 \label{intro:K23}
\end{equation}
Since $f\ge d_*$, we have
$\max_i\sup_{\R}f|f_i''|\le d_*^{-1/2}K_2$.
Consequently, \cite[Theorem~1.2]{SWX22f} gives a weak solution of
\eqref{intro:NS}--\eqref{intro:Navier} for every $\Phi\ge0$ and fixed
$\alpha>0$. 

To state the outlet assumptions, let
$F_+(t)=f(t)$ and $F_-(t)=f(-t)$ for $t\ge0$.
For each sign $\sigma\in\{+,-\}$, assume that either
\begin{equation}
 \int_0^\infty F_\sigma(t)^{-3}\,dt=\infty,
 \qquad \lim_{t\to\infty}F_\sigma'(t)=0,
 \label{intro:outlet-I}
\end{equation}
or
\begin{equation}
 \int_0^\infty F_\sigma(t)^{-3}\,dt<\infty,
 \qquad
 \lim_{t\to\infty}
 \frac{\sup_{s\ge t}|F_\sigma'(s)|}
      {\left(\int_t^\infty F_\sigma(s)^{-3}\,ds\right)^{1/2}}=0.
 \label{intro:outlet-II}
\end{equation}
Under the outlet assumptions \eqref{intro:outlet-I} or \eqref{intro:outlet-II},
the following estimate from \cite[Propositions~5.6 and~5.7]{SWX22f} 
\begin{equation}
 \norm{\nabla \Bu}{L^2(D_t^\sigma)}^2
       +\alpha\norm{\Bu}{L^2(\partial\Omega\cap\partial D_t^\sigma)}^2
       \le\frac{C}{F_\sigma(t)^2},\qquad t\ge0
 \label{intro:local-energy}
\end{equation}
holds,
where the constant in \eqref{intro:local-energy} depends on $\alpha$, $\Phi$,
and the channel, but not on $t$. Here we set 
\[
 D_t^+=\Omega_{t-\beta_*f(t),t},\qquad
 D_t^-=\Omega_{-t,-t+\beta_*f(-t)},
\]
 with $\beta_*=[4(1+\beta)]^{-1}$.
This estimate is available without
any smallness assumption on $\Phi$. Our main result converts this estimate into
a pointwise bound valid up to the walls.

\begin{theorem}[Pointwise decay]
\label{intro:main}
Let $\Omega$ be given by \eqref{intro:domain}, with
$f_1,f_2\in C^3(\R)$ satisfying \eqref{intro:basic-geometry} and
\eqref{intro:K23}. Suppose that each outlet satisfies
\eqref{intro:outlet-I} or \eqref{intro:outlet-II}.
Fix $\alpha>0$ and any flux $\Phi\ge0$, and let $\Bu$ be a weak
solution constructed in \cite[Theorem~1.2]{SWX22f}. Then one has 
\begin{equation}
 |\Bu(x_1,x_2)|\le\frac{C}{f(x_1)}
       \qquad\text{for all }(x_1,x_2)\in\overline\Omega,
 \label{intro:pointwise}
\end{equation}
where $C$ depends on $\alpha$, $\Phi$, and $\Omega$,
including $d_*,\beta,K_2,K_3$ and the outlet bounds, but is independent
of $x$.
In particular, if $f(t)\to\infty$ as $t\to+\infty$ or $t\to-\infty$,
then
\begin{equation}
 \sup_{f_1(t)\le x_2\le f_2(t)}|\Bu(t,x_2)|\longrightarrow0
 \label{intro:uniform-decay}
\end{equation}
at that outlet.
\end{theorem}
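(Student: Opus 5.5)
The plan is to turn the local energy bound \eqref{intro:local-energy} into an $L^\infty$ bound by rescaling each strip to unit size and applying $\lambda$-uniform regularity for the Stokes system. We then treat the convective term as a perturbation and absorb it.

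Fix $t_0$ with $|t_0|$ large; bounded $|t_0|$ is handled by standard local regularity on a compact piece, using $f\ge d_*$. Set $r=f(t_0)$. By the Lipschitz bound $\beta$, the width is comparable to $r$ on a strip of length $\sim\beta_* r$ around $t_0$. Covering by finitely many shifted copies of $D_t^\sigma$, \eqref{intro:local-energy} gives $\|\nabla\Bu\|_{L^2}^2+\alpha\|\Bu\|_{L^2(\partial\Omega)}^2\le Cr^{-2}$ on a neighbourhood $U$ of $\Sigma(t_0)$ of size $\sim r$. We also need $L^2$ control of $\Bu$ itself. The flux condition gives $|\int_{\Sigma}u_1|=\Phi$, so a Poincaré inequality on cross-sections (applied to $u_1-\Phi/f$ and to $u_2$, which vanishes in the normal direction) yields $\|\Bu\|_{L^2(U)}\le C r\|\nabla\Bu\|_{L^2(U)}+C\Phi\, r^{1/2}\cdot r^{1/2}/r\le C$.

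Now rescale: $\BU(y)=r\,\Bu(t_0+ry_1,\bar f(t_0)+ry_2)$ and $P(y)=r^2p$. Then $\BU$ solves the Navier--Stokes system on a unit-size domain $\widehat U$. Its boundary satisfies $\BU\cdot\Bn=0$ and $\BD(\BU)\Bn\cdot\Bt+\lambda\BU\cdot\Bt=0$ with $\lambda=\alpha r$. The rescaled energy obeys $\|\nabla\BU\|_{L^2}+\|\BU\|_{L^2}\le C$ uniformly in $t_0$. The rescaled walls $y_2=g_i(y_1)=r^{-1}(f_i(t_0+ry_1)-\bar f(t_0))$ satisfy $|g_i'|\le\beta$, $|g_i''|=r|f_i''|\le K_2r^{-1/2}$ and $|g_i'''|=r^2|f_i'''|\le K_3$. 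So the rescaled curvature tends to zero and the $C^3$ norms are uniformly bounded.

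Near each rescaled wall, flatten by $z_2=y_2-g_i(y_1)$. This yields a Stokes system in a half-ball with right-hand side $-\BU\cdot\nabla\BU$ plus geometric lower-order terms, nonzero divergence from the change of variables, and nonhomogeneous tangential data. The friction coefficient becomes $\lambda$ times a factor $1+O(r^{-1/2})$, with curvature corrections entering the tangential condition through $g_i''$.

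The core step, and the main obstacle, is a local boundary estimate for
\[
 -\Delta\Bw+\nabla q=\BF,\quad \diver\Bw=h,\quad w_2=0,\quad \partial_2 w_1-\lambda w_1=\psi \text{ on } \{z_2=0\},
\]
of the form $\|\Bw\|_{W^{1,s}(B_{1/2}^+)}\le C(\|\BF\|+\|h\|_{W^{1,\cdot}}+\|\psi\|_{\cdot}+\|\Bw\|_{H^1(B_1^+)})$ with $C$ independent of $\lambda\ge0$. I would follow \cite[Theorem~4.5]{TACG}. Multiply by a cutoff and test with $\Bw$ to get energy control. Here the $\lambda$ boundary term has a good sign and appears on the left as $\lambda\|w_1\|_{L^2}^2$. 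Take tangential difference quotients, which commute with the boundary condition since $\lambda$ is constant after freezing, to bound $\partial_1\nabla\Bw$ uniformly. Recover $\partial_2^2\Bw$ and $\nabla q$ from the equations. The errors from the variable coefficient $\lambda(1+O(r^{-1/2}))$ are $\lambda r^{-1/2}$ times boundary traces. I would control these by the good term $\lambda\|w_1\|^2$ together with the smallness $r^{-1/2}$, since $\lambda r^{-1/2}=\alpha r^{1/2}$ is paired with $\lambda^{1/2}$-weighted traces. I expect this balancing to be delicate, and it is where the weights in $K_2,K_3$ are used.

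With this estimate, a bootstrap gives $\BU\in H^2$. Start from $\BU\in H^1\subset L^q$, so $\BU\cdot\nabla\BU\in L^{s}$ for every $s<2$, which gives $W^{2,s}$ and hence $W^{1,q}$. A second step gives $H^2(\widehat U')\subset L^\infty$. Alternatively, use Gagliardo--Nirenberg, $\|\BU\cdot\nabla\BU\|_{L^2}\le C\|\BU\|_{L^2}^{1/2}\|\BU\|_{H^2}^{1/2}\|\nabla\BU\|_{L^2}^{1/2}\|\BU\|_{H^2}^{1/2}$, on nested domains and absorb with a covering/iteration lemma. Combined with interior estimates this gives $\|\BU\|_{L^\infty(\widehat\Sigma)}\le C$ on the rescaled cross-section, that is $|\Bu(t_0,x_2)|\le C/r=C/f(t_0)$ up to the walls. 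The uniform decay \eqref{intro:uniform-decay} follows immediately when $f\to\infty$.
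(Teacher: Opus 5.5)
\textbf{Verdict: there is a genuine gap.} Your overall architecture matches the paper's: rescale by the width, get a uniform $H^1$ bound, flatten, prove a $\lambda$-uniform Stokes estimate by tangential difference quotients, absorb the convection, and embed into $L^\infty$. The gap is in the reduction to your half-ball model problem, and it is the source of the obstacle you flag.

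\textbf{Why the reduction fails.} You flatten directly by $z_2=y_2-g_i(y_1)$, where only $|g_i'|\le\beta$ is known. The resulting geometric terms are then not lower order:
\begin{itemize}
\item the pulled-back Laplacian contains $-2g_i'\partial_{12}+(g_i')^2\partial_{22}$;
\item the pressure gradient contains $g_i'\partial_2P$;
\item impermeability reads $w_2=g_i'w_1$, and the correction $(0,g_i'w_1)$ contributes $g_i'\Delta w_1$ to the force;
\item the tangential condition acquires a term $\pm2g_i'(1+(g_i')^2)^{-1}\partial_1w_1$, whose $H^{1/2}$ norm is of the order of $\|\nabla^2\Bw\|_{L^2}$.
\end{itemize}
All of these are top order with coefficients of size $\beta$. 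They can be neither absorbed nor treated as data in your model problem with $w_2=0$ and $\partial_2w_1-\lambda w_1=\psi$.

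The same omission produces your friction difficulty. With $J=(1+(g_i')^2)^{1/2}$ one has $J'=g_i'g_i''/J=O(r^{-1/2})$. Hence $\lambda/J$ varies by $O(\alpha r^{1/2})$ across the chart, which is unbounded and cannot be moved into the boundary datum.

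\textbf{The missing step.} Rotate at each base point so that the rescaled wall becomes $y_2=\zeta(y_1)$ with $\zeta(0)=\zeta'(0)=0$, and work on a chart of fixed small size $\ell$.
\begin{itemize}
\item Since $|\zeta''|\le CK_2r^{-1/2}$, the slope $q=\zeta'$ satisfies $\|q\|_{L^\infty}\le C\ell r^{-1/2}\le C\ell d_*^{-1/2}$. Every top-order geometric term above then carries the small factor $\|q\|_{L^\infty}$ and is absorbed once $\ell$ is fixed.
\item Moreover $J-1=q^2/(J+1)=O(r^{-1})$. So $\alpha r/J=\alpha r-b_r$ with $b_r=\alpha r(J-1)/J$ and $\|b_r\|_{W^{1,\infty}}\le C\alpha$.
\item The term $b_rv_1$ goes into the $H^{1/2}$ boundary datum. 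The uniform estimate is then needed only for the constant $\lambda=\alpha r$, where the friction term in the difference-quotient identity is nonnegative and can be discarded. No balancing is required.
\end{itemize}

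\textbf{Your friction heuristic is sound but incomplete.} If you keep a variable coefficient $\lambda\mu(s_1)$ on the left, the commutator $2\lambda\int(\delta_\kappa\mu)U_1\chi^2\delta_\kappa U_1$ is bounded by half the positive term plus $C\lambda\|\mu'\|_{L^\infty}^2\|U_1\|_{L^2(I_1)}^2$. And $\lambda\|\mu'\|_{L^\infty}^2\le C\alpha\beta^2K_2^2$, precisely because of the $f^{3/2}$ weight in $K_2$. However, this only handles the friction; it does not cure the non-small slope, and you did not carry it out.

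\textbf{Smaller points.}
\begin{itemize}
\item Your model estimate should control $W^{2,s}$, not $W^{1,s}$.
\item A $W^{2,s}$ bootstrap with $s<2$ would need an $L^s$ version of the $\lambda$-uniform estimate, which the $L^2$ difference-quotient method does not supply.
\item The Gagliardo--Nirenberg route is the right one, but your displayed bound equals $C\|\BU\|_{L^2}^{1/2}\|\nabla\BU\|_{L^2}^{1/2}\|\BU\|_{H^2}$. This is linear in the top norm with a non-small constant, so it cannot be absorbed. Use instead $\|\Bw\|_{L^\infty}\le C\|\nabla^2\Bw\|_{L^2}^{1/2}\|\Bw\|_{L^2}^{1/2}+C\|\Bw\|_{L^2}$. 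This makes the convective term of order $X^{1/2}$, which closes by Young's inequality once $X$ is known to be finite by qualitative, possibly $\lambda$-dependent, regularity.
\item In your $L^2$ bound, $u_2$ does not vanish on sloped walls, since $u_2=f_i'u_1$ there. That step needs either the boundary trace or a zero-flux Poincar\'e inequality applied after subtracting a flux carrier.
\end{itemize}
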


\begin{remark}[Scope of the conclusion]
The conclusion applies to the solutions constructed in \cite{SWX22f},
for which the local energy estimate is available.
The constant may depend on $\alpha$.
On outlets of bounded width, the estimate gives a uniform velocity bound.
\end{remark}

\begin{remark}[Sharpness of the decay rate]
For $\Phi>0$, the flux constraint and \eqref{intro:pointwise} give
\begin{equation}
 \frac{\Phi}{f(t)}
 \le \sup_{f_1(t)\le x_2\le f_2(t)}|\Bu(t,x_2)|
 \le \frac{C}{f(t)}.
 \label{intro:sharp-scale}
\end{equation}
Thus the order $f(t)^{-1}$ is sharp in the cross-sectional supremum
norm. This does not assert a matching lower bound at each point,
or convergence of the rescaled velocity to a prescribed profile.
\end{remark}

\begin{remark}[Role of the weighted wall bounds]
The weights in \eqref{intro:K23} are sufficient for our argument;
their optimality is not claimed. After rescaling by the local width,
\[
 |(f_i^r)''|\le CK_2r^{-1/2},\qquad
 |(f_i^r)'''|\le CK_3.
\]
As shown in Step~1 of Proposition~\ref{prop:boundary-decay}, rotation
to tangent coordinates  gives
\[
 \|\zeta'\|_{L^\infty(-2\ell,2\ell)}\le C\ell K_2r^{-1/2},\qquad
 \|\zeta'''\|_{L^\infty(-2\ell,2\ell)}
 \le C(K_3+K_2^2r^{-1}).
\]
Since $r\ge d_*>0$, a fixed small $\ell$ makes the slope uniformly
small while the higher coefficient derivatives remain bounded.
\end{remark}

\begin{remark}[Geometry and a class of widening channels]
The bounds \eqref{intro:K23} are sufficient conditions for the
boundary argument and are stronger than $\sup f|f_i''|<\infty$,
the second-derivative condition used in \cite{SWX22f}. For example, set
\[
 f(t)=(1+t^2)^{a/2},\qquad f_1(t)=-\tfrac12 f(t),\qquad
 f_2(t)=\tfrac12 f(t),\qquad 0<a<\tfrac35.
\]
As $|t|\to\infty$,
\[
 f(t)^{3/2}|f_i''(t)|=O(|t|^{5a/2-2}),\qquad
 f(t)^2|f_i'''(t)|=O(|t|^{3a-3}),
\]
so both $K_2$ and $K_3$ are finite. For $0<a\le1/3$,
\eqref{intro:outlet-I} holds. For $1/3<a<3/5$,
\[
 \int_t^\infty F_\sigma(s)^{-3}\,ds\asymp t^{1-3a},\qquad
 \sup_{s\ge t}|F_\sigma'(s)|\asymp t^{a-1},
\]
and the quotient in \eqref{intro:outlet-II} is therefore of order
$t^{(5a-3)/2}\to0$. Theorem~\ref{intro:main} yields
$|\Bu(x)|\le C(1+x_1^2)^{-a/2}$. Within this family, the restriction
$a<3/5$ comes from the outlet conditions: at $a=3/5$ the quotient
has a positive limit, although $K_2$ and $K_3$ remain finite.
\end{remark}

The rest of this paper is organized as follows. Section~\ref{sec:preliminaries} reviews the local Stokes estimate
and derives the rescaled $H^1$ bound from the energy estimate of
\cite{SWX22f}. Section~\ref{sec:pointwise} proves
Theorem~\ref{intro:main} by combining interior regularity estimate with the
uniform boundary estimate. Finally, the parameter-independent Stokes estimate
is proved in Appendix~\ref{app:local-stokes-proof}.

\section{Preliminaries}
\label{sec:preliminaries}

In this section, we first give a local boundary estimate for the   Stokes system with Navier slip boundary condition.
 Then, combining  the  Poincar\'e inequality, we derive the  uniform $H^1$ bound for the solution obtained in \cite[Theorem~1.2]{SWX22f}.
\subsection{A local boundary estimate for the Stokes system}
The following local estimate allows nonzero divergence and is uniform in the friction parameter. Its proof in
Appendix~\ref{app:local-stokes-proof} adapts the tangential
difference-quotient argument of \cite[Theorem~4.5]{TACG}.

For $\rho>0$, set
\[
 B_\rho^+=\{s\in\mathbb R^2:|s|<\rho,\ s_2>0\},
 \qquad I_\rho=\{(s_1,0):|s_1|<\rho\}.
\]
On $I_1$, let $\Bn=-\Be_2$, $\Bt=\Be_1$, and
$\BD(\BU)=(\nabla \BU+\nabla \BU^{\mathsf T})/2$.
Consider the Stokes system with Navier slip type boundary condition
\begin{equation*}\tag{S}\label{eq:local-stokes}
\begin{cases}
 -\Delta \BU+\nabla P=\BF,&\text{in }B_1^+,\\
 \operatorname{div}\BU=G,&\text{in }B_1^+,\\
 \BU\cdot \Bn=0,\quad [\BD(\BU)\Bn]\cdot\Bt+\lambda U_\tau=Y,
 &\text{on }I_1,
\end{cases}
\end{equation*}
where $\lambda\ge0$ is a constant. Here $Y$ is a scalar function.
We consider
the regularity of solutions only in $B_{1/2}^+$, so no boundary
condition is prescribed on the semicircular arc. More precisely, for
every $\Bvphi\in H^1(B_1^+;\mathbb R^2)$ supported away from that arc
and satisfying $\Bvphi\cdot\Bn=0$ on $I_1$, the weak formulation is
\begin{equation}
 \int_{B_1^+}\nabla\BU:\nabla\Bvphi
 -\int_{B_1^+}P\,\operatorname{div}\Bvphi
 +2\lambda\int_{I_1}U_\tau\varphi_\tau
 =\int_{B_1^+}\BF\cdot\Bvphi
 +2\langle Y,\varphi_\tau\rangle,
 \label{eq:local-stokes-weak}
\end{equation}
together with $\operatorname{div}\BU=G$ in distributions.

\begin{lemma}[Local boundary estimate]\label{lem:local-stokes}
Let $(\BU,P)\in H^1(B_1^+;\mathbb R^2)\times L^2(B_1^+)$ be a weak
solution of \eqref{eq:local-stokes}. Suppose that
\[
 \BF\in L^2(B_1^+;\mathbb R^2),\qquad
 G\in H^1(B_1^+),\qquad Y\in H^{1/2}(I_1).
\]
Then $\BU\in H^2(B_{1/2}^+)$ and $P\in H^1(B_{1/2}^+)$, and
\begin{align}\label{eq:local-boundary-estimate}
 \|\BU\|_{H^2(B_{1/2}^+)}+\|\nabla P\|_{L^2(B_{1/2}^+)}
 \le C\bigl(&\|\BU\|_{H^1(B_1^+)}
 +\|P-(P)_{B_1^+}\|_{L^2(B_1^+)}\notag\\
 &+\|\BF\|_{L^2(B_1^+)}+\|G\|_{H^1(B_1^+)}
 +\|Y\|_{H^{1/2}(I_1)}\bigr),
\end{align}
where $(P)_{B_1^+}$ denotes the mean of $P$ and the constant $C$ is independent of
$\lambda$.
\end{lemma}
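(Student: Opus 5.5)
We follow the tangential difference-quotient argument of \cite[Theorem~4.5]{TACG}. The guiding observation is that the friction term enters the energy identity with a good sign, so it can simply be discarded. Two preliminary reductions come first. Fix a cutoff $\eta\in C_c^\infty(B_{3/4})$ with $\eta=1$ on $B_{1/2}$, and replace $P$ by $P-(P)_{B_1^+}$; this does not affect \eqref{eq:local-stokes-weak}, since $\int_{B_1^+}\operatorname{div}\Bvphi$ is not assumed to vanish. To remove the divergence, we use a Bogovskii-type or Neumann-problem lift: we find $\BPsi\in H^2(B_{3/4}^+)$ with $\operatorname{div}\BPsi=G$ and $\Psi_2=0$ on $I_1$, satisfying $\|\BPsi\|_{H^2}\le C\|G\|_{H^1}$. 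For instance, take $\BPsi=\nabla\phi$ with $\Delta\phi=\eta G$ and Neumann data $\partial_2\phi=0$ on the flat part, solved after even reflection. Subtracting $\BPsi$ changes $\BF$ by $\Delta\BPsi\in L^2$ and $Y$ by $-[\BD(\BPsi)\Bn]\cdot\Bt-\lambda\Psi_1$. The last term is not uniform in $\lambda$, so instead we keep $G$ and the original $Y$ and move the $\BPsi$ contributions to the right-hand side of the tested identity. There the term $\lambda\int_{I_1}U_\tau\,\delta_{-h}(\eta^2\delta_h\Psi_1)$ must be controlled by the good term $\lambda\int\eta^2|\delta_hU_1|^2$ via Cauchy--Schwarz, which leaves $\lambda\|\delta_h\Psi_1\|^2$. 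That remainder is not small. A cleaner choice is therefore $\Psi_1=0$ on $I_1$: take $\BPsi$ vanishing on $I_1$ entirely, a Bogovskii solution after odd/even extension with $\BPsi|_{I_1}=0$, which requires $\int$ compatibility that the cutoff allows us to arrange by correcting with a fixed field supported away from $I_1$. With $\BPsi$ vanishing on $I_1$, no $\lambda$-term involving $\BPsi$ appears.

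The first main step is tangential regularity. We test \eqref{eq:local-stokes-weak} with $\Bvphi=\delta_{-h}(\eta^2\delta_h\BU)$, where $\delta_h$ is the difference quotient in $s_1$. This field is admissible because $\delta_h$ preserves $U_2=0$ on $I_1$. The friction term becomes
\[
2\lambda\int_{I_1}\eta^2|\delta_hU_1|^2+(\text{commutator }\lambda\int U_1\,\delta_h U_1\,\partial_1\eta^2\text{-type terms}).
\]
The commutator is the delicate point: it must be rewritten as $\lambda\int_{I_1}\eta\,\delta_hU_1\cdot U_1(\cdot)\,\delta_h\eta$ and bounded by $\lambda\int\eta^2|\delta_hU_1|^2/2+C\lambda\int_{I_1}|U_1|^2(\delta_h\eta)^2$. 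The remaining term $\lambda\|U_1\|^2_{L^2(I_1\cap\operatorname{supp})}$ must itself be controlled uniformly. For this we first test with $\eta^2\BU$, which gives $\lambda\int\eta^2U_1^2\le C(\text{RHS})$. Here $\lambda$ multiplies only $\int\eta^2 U_1^2$, and the identity contains no term like $\lambda\int U_1\cdot$data, so it is independent of $\lambda$. Using nested cutoffs with $(\delta_h\eta)^2\lesssim\tilde\eta^2$, this yields uniform control. The pressure term $\int P\,\delta_{-h}\operatorname{div}(\eta^2\delta_h\BU)$ equals $\int\delta_hP\,(\eta^2\delta_hG+\text{l.o.t.})$ and is handled by the usual trick: bound $\|\eta\delta_hP\|_{H^{-1}}$ through the equation, using $\|\delta_h P\|_{H^{-1}}\lesssim\|P\|_{L^2}$. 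The $Y$ term $\langle Y,\delta_{-h}(\eta^2\delta_hU_1)\rangle$ is bounded by $\|Y\|_{H^{1/2}}\|\eta^2\delta_hU\|_{H^{1/2}(I_1)}\lesssim\|Y\|_{H^{1/2}}\|\nabla(\eta^2\delta_hU)\|_{L^2}$ by the trace theorem, and $Y\in H^{1/2}$ allows one tangential derivative to be moved onto it in the dual sense. The outcome is $\|\eta\,\partial_1\nabla\BU\|_{L^2}$ bounded by the right side of \eqref{eq:local-boundary-estimate}, with no dependence on $\lambda$.

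The second main step is normal regularity. From $\operatorname{div}\BU=G$ we get $\partial_{22}U_2=\partial_2G-\partial_{12}U_1\in L^2$. The first momentum equation gives $\partial_1P=F_1+\Delta U_1$, which has $H^{-1}$ control plus the tangential bound. Combining the second equation $\partial_2P=F_2+\partial_{11}U_2+\partial_{22}U_2\in L^2$ with $\partial_1P$, obtained from the first equation together with the tangential control of $\partial_1\nabla U_1$, shows $\nabla P\in L^2$. Finally $\partial_{22}U_1=\partial_1P-F_1-\partial_{11}U_1\in L^2$. No boundary condition is used in this step, so $\lambda$ cannot enter.

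The main obstacle is securing $\lambda$-independence in the tangential step. Specifically, every appearance of $\lambda$ must be either of the favorable sign or absorbed into $\lambda\int\tilde\eta^2U_1^2$, which is in turn controlled by the preliminary energy test. This requires that $G$ be lifted without tangential boundary trace, and that the $\BPsi$ construction be carried out carefully.
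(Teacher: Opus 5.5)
Your architecture (tangential difference quotients, a friction term of good sign, normal derivatives from the equations) is the paper's. However, your handling of the pressure has two genuine gaps.

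First, in the tangential step you test with $\delta_{-h}(\eta^2\delta_h\BU)$ while keeping $G$, so the pressure produces $\int\delta_hP\,\eta^2\delta_hG$. The bound $\|\delta_hP\|_{H^{-1}}\lesssim\|P\|_{L^2}$ does not control this term. It amounts to $\int\delta_hP\,\phi=-\int(P-(P)_{B_1^+})\,\delta_{-h}\phi$, and for $\phi=\eta^2\delta_hG$ the right side involves a second tangential difference of $G\in H^1$, which is not bounded uniformly in $h$. The trick does handle the lower-order piece $2\eta\nabla\eta\cdot\delta_h\BU$.

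Your lift $\BPsi$ would remove the bad term if you actually used it. With $\BPsi=0$ on $I_1$ you can subtract it outright: $Y$ then changes only by $\tfrac12\partial_2\Psi_1\in H^{1/2}(I_1)$, and no $\lambda$-term appears. But your sketched construction does not produce such a $\BPsi$. Bogovski\u{\i}'s operator gives $H^2$ bounds only for divergence data in $H^1_0$, and even reflection yields $\Psi_2=0$ but not $\Psi_1=0$ on $I_1$. You would need something like a Dirichlet Stokes problem on a smooth subdomain, with the mean-zero correction placed off the support of $\eta$.

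The paper avoids any $H^2$ lift by correcting after differencing. It sets $\BPhi_\kappa=\chi^2\delta_\kappa\BU-\BZ_\kappa$, where $\BZ_\kappa\in H^1_0(B_b^+)$ is the Bogovski\u{\i} solution of $\diver\BZ_\kappa=\chi^2\delta_\kappa G+2\chi\nabla\chi\cdot\delta_\kappa\BU$; these data are merely in $L^2$. The test field $-\delta_{-\kappa}\BPhi_\kappa$ is then solenoidal, so the pressure disappears. Since $\BZ_\kappa$ has zero trace, the friction term is exactly $2\lambda\int_{I_b}\chi^2|\delta_\kappa U_1|^2\ge0$.

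Note also that there is no commutator in the friction term. With the cut-off inside the outer difference quotient, discrete integration by parts gives exactly $-2\lambda\int_{I_1}\eta^2|\delta_hU_1|^2$, of the same sign as the principal term. Your preliminary test with $\eta^2\BU$ is therefore correct but unnecessary.

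Second, your normal step is circular for $\partial_1P$. The first momentum equation gives $\partial_1P=F_1+\partial_{11}U_1+\partial_{22}U_1$, and $\partial_{22}U_1$ is exactly what you afterwards recover from $\partial_1P$. Tangential control of $\partial_1\nabla\BU$ plus that equation does not put $\partial_1P$ in $L^2$. Nothing in your tangential step gives $\delta_hP$ in $L^2$ either, since you only obtain $H^{-1}$ control.

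A separate pressure estimate is required. The paper tests the full identity with $-\delta_{-\kappa}\Bvphi$, where $\Bvphi\in H^1_0(B_{2/3}^+)$ is a Bogovski\u{\i} field with $\diver\Bvphi=q$ for an arbitrary zero-mean $q$. The zero trace kills both boundary terms, including the one with $\lambda$, and yields $\|\delta_\kappa P-c_\kappa\|_{L^2}\le C(\|\nabla\delta_\kappa\BU\|_{L^2}+\|\BF\|_{L^2})$. The mean $c_\kappa$ is then controlled by pairing with a fixed bump and moving the difference quotient onto it, at the cost of $\|P-(P)_{B_1^+}\|_{L^2(B_1^+)}$. This is exactly where that term in \eqref{eq:local-boundary-estimate} comes from. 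Equivalently, one can use $\nabla\partial_1P=\partial_1\BF+\Delta\partial_1\BU\in H^{-1}$ together with Ne\v{c}as's inequality.

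Once $\partial_1P\in L^2$ is established, your recovery of $\partial_{22}U_2$, $\partial_2P$ and $\partial_{22}U_1$ goes through. A minor point: subtracting the mean of $P$ is harmless because $\int_{B_1^+}\diver\Bvphi=0$ for every admissible test field, which is the opposite of the reason you give.
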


\begin{proof}
See Appendix~\ref{app:local-stokes-proof}.
\end{proof}

\subsection{Local \texorpdfstring{$H^1$}{H1} bounds and rescaling}
\label{subsec:local-h1-rescaling}

We use the local energy estimate
\eqref{intro:local-energy} to obtain the uniform $H^1$ bound required after rescaling. First, we  recall a Poincar\'e inequality for zero-flux fields.  For the proof, see \cite[Lemma~2.1]{SWX22f}.

\begin{lemma}[Zero-flux Poincar\'e inequality]
\label{lem:zero-flux-poincare}
Let $a_0<b_0$. Suppose that
$\Bv\in H^1(\Omega_{a_0,b_0};\mathbb R^2)$ has zero flux
across almost every cross-section and satisfies
$\Bv\cdot \Bn=0$ on
$\partial\Omega\cap\partial\Omega_{a_0,b_0}$.
Then
\[
 \|\Bv\|_{L^2(\Omega_{a_0,b_0})}
 \le
 C\|f\|_{L^\infty(a_0,b_0)}
 \bigl(1+\|f_2'\|_{L^\infty(a_0,b_0)}\bigr)
 \|\nabla \Bv\|_{L^2(\Omega_{a_0,b_0})}.
\]
\end{lemma}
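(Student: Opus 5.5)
The plan is to prove the inequality slice by slice on the cross-sections $\Sigma(t)$ and then integrate in $t$. The zero-flux condition controls $v_1$. The impermeability condition $\Bv\cdot\Bn=0$ converts the boundary value of $v_2$ into a multiple of the boundary value of $v_1$, so that $v_2$ is controlled as well. Throughout, write $L=\|f\|_{L^\infty(a_0,b_0)}$ and $\beta_2=\|f_2'\|_{L^\infty(a_0,b_0)}$.

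\emph{Step 1 (regularity on slices).} I would first justify that for a.e.\ $t\in(a_0,b_0)$ the restriction $x_2\mapsto\Bv(t,x_2)$ is absolutely continuous on $[f_1(t),f_2(t)]$, with derivative $\partial_2\Bv(t,\cdot)\in L^2$, and that its endpoint values coincide with the boundary trace of $\Bv$. To do this, I would flatten the segment by the map $(x_1,x_2)\mapsto\bigl(x_1,(x_2-f_1(x_1))/f(x_1)\bigr)$. This map is bi-Lipschitz on $\Omega_{a_0,b_0}$, since $f\ge d_*$ and $f_i'$ are bounded, and it sends vertical lines to vertical lines. On the rectangle $(a_0,b_0)\times(0,1)$ the ACL characterization of $H^1$ applies, and the trace on the horizontal sides is the limit along a.e.\ vertical line. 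By approximation with functions smooth up to the boundary, the a.e.\ line restrictions have the trace as endpoint values. I expect this measure-theoretic identification to be the only delicate point; the rest is elementary.

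\emph{Step 2 (the component $v_1$).} For a.e.\ $t$, $v_1(t,\cdot)$ has zero mean on an interval of length $f(t)$. Wirtinger's inequality gives
\[
\int_{f_1(t)}^{f_2(t)}v_1^2\,dx_2\le \frac{f(t)^2}{\pi^2}\int_{f_1(t)}^{f_2(t)}|\partial_2v_1|^2\,dx_2 .
\]
Since $v_1(t,\cdot)$ has zero mean, it vanishes somewhere on the slice. Integrating $\partial_2 v_1$ from that point and applying Cauchy--Schwarz gives the pointwise bound
\[
|v_1(t,f_2(t))|\le f(t)^{1/2}\|\partial_2v_1(t,\cdot)\|_{L^2(\Sigma(t))}.
\]

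\emph{Step 3 (the component $v_2$).} On the upper wall the normal is parallel to $(-f_2',1)$. Hence $\Bv\cdot\Bn=0$ means $v_2(t,f_2(t))=f_2'(t)\,v_1(t,f_2(t))$. For $x_2$ in the slice,
\[
v_2(t,x_2)=f_2'(t)\,v_1(t,f_2(t))-\int_{x_2}^{f_2(t)}\partial_2v_2\,ds .
\]
Combining this with Step 2 gives
\[
\int_{\Sigma(t)}v_2^2\,dx_2\le 2f(t)^2\bigl(\beta_2^2\|\partial_2v_1\|_{L^2(\Sigma(t))}^2+\|\partial_2v_2\|_{L^2(\Sigma(t))}^2\bigr).
\]
Adding the estimates of Steps 2 and 3 and integrating over $t\in(a_0,b_0)$ yields
\[
\|\Bv\|_{L^2(\Omega_{a_0,b_0})}^2\le C\,L^2(1+\beta_2)^2\|\nabla\Bv\|_{L^2(\Omega_{a_0,b_0})}^2 .
\]
Taking square roots gives the claimed inequality, with an absolute constant $C$. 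Only the upper wall is used, which explains why only $f_2'$ appears in the statement.
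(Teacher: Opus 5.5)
Your proof is correct. The paper does not prove this lemma itself but cites \cite[Lemma~2.1]{SWX22f}, and your argument is essentially the standard one behind that citation: Wirtinger's inequality on each $\Sigma(t)$ for the zero-mean component $v_1$, and the upper-wall impermeability relation $v_2=f_2'v_1$ to control $v_2$, which is exactly why only $f_2'$ enters the constant.
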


We retain the notation $F_\sigma(t)$ and $D_t^\sigma$
introduced in Section~1, where $\sigma\in\{+,-\}$.

\begin{cor}[Uniform bounds after rescaling]
\label{cor:rescaled-energy}
Let $\Bu$ be the solution considered in Section~1 and
assume that \eqref{intro:local-energy} holds.
Then, for each $\sigma\in\{+,-\}$,
\begin{equation}
 \|\Bu\|_{L^2(D_t^\sigma)}
 +F_\sigma(t)\|\nabla \Bu\|_{L^2(D_t^\sigma)}
 \le C,
 \qquad t\ge0.
 \label{eq:local-h1-scaled}
\end{equation}
Consequently, let $r>0$ satisfy
$c_0F_\sigma(t)\le r\le C_0F_\sigma(t)$
for fixed positive constants $c_0,C_0$, and set
\[
 D_t^{\sigma,r}
 :=\frac{D_t^\sigma-x_0}{r},
 \qquad
 \Bu^r(z):=r\,\Bu(x_0+rz),
 \qquad x_0\in\mathbb R^2.
\]
Then
\begin{equation}
 \|\Bu^r\|_{H^1(D_t^{\sigma,r})}\le C.
 \label{eq:rescaled-h1-bound}
\end{equation}
The constant $C$ in \eqref{eq:rescaled-h1-bound}
depends only on $\alpha$, $\Phi$, $\Omega$, and $C_0$,
and is independent of $\sigma$, $t$, $r$, and $x_0$.
The dependence on domain $\Omega$ includes the geometric
and outlet parameters entering \eqref{intro:local-energy}.
\end{cor}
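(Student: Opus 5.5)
The plan is to split $\Bu$ into a flux-carrying part and a zero-flux remainder on $D_t^\sigma$, bound the remainder by the zero-flux Poincar\'e inequality (Lemma~\ref{lem:zero-flux-poincare}), and then rescale, tracking how each term transforms. Fix $\sigma$ and $t\ge0$, and write $D=D_t^\sigma$. This is a strip of length $\beta_*F_\sigma(t)$, and $f$ is comparable to $F_\sigma(t)$ on it.

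\textbf{Width comparison.} For $s$ in the $x_1$-range of $D$ we have $|f(s)-F_\sigma(t)|\le 2\beta\beta_*F_\sigma(t)\le \tfrac12 F_\sigma(t)$, since $f'$ is bounded by $2\beta$ and $2\beta\beta_*\le1/2$. Hence
\[
\tfrac12F_\sigma(t)\le f\le \tfrac32F_\sigma(t)\quad\text{on }D .
\]

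\textbf{Gradient bound.} From \eqref{intro:local-energy}, $F_\sigma(t)\|\nabla\Bu\|_{L^2(D)}\le C$ immediately.

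\textbf{Flux carrier.} For the $L^2$ bound, construct a divergence-free carrier $\Ba$ with flux $\Phi$ and $\Ba\cdot\Bn=0$ on the walls. A simple choice is
\[
\Ba=\Phi\,\nabla^\perp\!\bigl(\chi((x_2-f_1(x_1))/f(x_1))\bigr),
\]
where $\chi$ is a smooth function equal to $0$ near $0$ and $1$ near $1$. Then $|\Ba|\le C\Phi/f$, because the $x_1$-derivative terms carry factors $f_i'$, which are bounded. For the gradient, $|\nabla\Ba|\le C\Phi\bigl(f^{-2}+|f_i''|f^{-1}\bigr)\le C f^{-2}$, using $f|f_i''|\le d_*^{-1/2}K_2$ and $f\ge d_*$. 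Consequently
\[
\|\Ba\|_{L^2(D)}\le C\Phi\,(|D|)^{1/2}F_\sigma^{-1}\le C,\qquad \|\nabla\Ba\|_{L^2(D)}\le CF_\sigma^{-1}.
\]

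\textbf{Zero-flux remainder.} Set $\Bv=\Bu-\Ba$. It has zero flux on every cross-section and $\Bv\cdot\Bn=0$ on the walls. Lemma~\ref{lem:zero-flux-poincare} on $D$ gives
\[
\|\Bv\|_{L^2(D)}\le C\tfrac32F_\sigma(1+\beta)\bigl(\|\nabla\Bu\|_{L^2(D)}+\|\nabla\Ba\|_{L^2(D)}\bigr)\le C.
\]
Adding back $\Ba$ proves \eqref{eq:local-h1-scaled}. A cruder alternative for the gradient of the carrier would also suffice, since only the $L^2$ scale matters here.

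\textbf{Rescaling.} For $\Bu^r(z)=r\Bu(x_0+rz)$ in two dimensions, the change of variables $dx=r^2dz$ gives
\[
\|\Bu^r\|_{L^2(D^{\sigma,r}_t)}=\|\Bu\|_{L^2(D)},\qquad \|\nabla\Bu^r\|_{L^2(D^{\sigma,r}_t)}=r\|\nabla\Bu\|_{L^2(D)} .
\]
Using $r\le C_0F_\sigma(t)$, both norms are bounded by $C(1+C_0)$, which is \eqref{eq:rescaled-h1-bound}. The constants are independent of $x_0$, $t$ and $\sigma$. The lower bound $r\ge c_0F_\sigma$ is not even needed for this step.

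\textbf{Main obstacle.} The only real issue is producing the $L^2$ bound on $\Bu$ itself, since \eqref{intro:local-energy} controls only gradients and boundary traces. The carrier must have $L^2$ norm $O(1)$ on a strip of area of order $F_\sigma^2$, which forces $|\Ba|\lesssim f^{-1}$. This is exactly the flux scaling, so the construction above works. If using a carrier is undesirable, an alternative is to split off the cross-sectional mean flow $\Phi/f$ in $u_1$ and control the $u_2$ component via the boundary-trace term in \eqref{intro:local-energy}.
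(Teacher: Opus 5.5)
Your proposal is correct and follows essentially the same route as the paper: a divergence-free stream-function carrier $\Ba$ vanishing near the walls with $|\Ba|\le C\Phi/f$ and $|\nabla\Ba|\le C\Phi/f^2$ (via $f|f_i''|\le d_*^{-1/2}K_2$), then the zero-flux Poincar\'e inequality applied to $\Bu-\Ba$ on $D_t^\sigma$, then the two-dimensional scaling identities $\|\Bu^r\|_{L^2}=\|\Bu\|_{L^2}$ and $\|\nabla\Bu^r\|_{L^2}=r\|\nabla\Bu\|_{L^2}$; your profile $\chi((x_2-f_1)/f)$ differs from the paper's $\int_{-1}^{2(x_2-\bar f)/f}\theta\,d\xi$ only cosmetically. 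One caution about your aside that a ``cruder'' carrier-gradient bound would suffice: the Poincar\'e step genuinely requires $F_\sigma(t)\|\nabla\Ba\|_{L^2(D_t^\sigma)}\le C$, i.e.\ $\nabla\Ba$ of size $f^{-2}$ on average, and this is precisely where $\sup f|f_i''|<\infty$ is used.
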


\begin{proof}
Fix $\sigma\in\{+,-\}$ and write
$D=D_t^\sigma$ and $R=F_\sigma(t)$.
By the definition of $D$ and the slope bound,
\[
 cR\le f(x_1)\le CR
 \quad\text{on }D,
 \qquad |D|\le CR^2.
\]
Moreover, \eqref{intro:local-energy} gives
\begin{equation}
 \|\nabla \Bu\|_{L^2(D)}\le CR^{-1}.
 \label{eq:local-gradient-input}
\end{equation}

To estimate the velocity itself, choose
$\theta\in C_c^\infty((-1,1))$ with
$\int_{-1}^1\theta(\xi)\,d\xi=1$, and set
\[
 \bar f=\frac{f_1+f_2}{2},
 \qquad
 \psi(x)
 =\Phi\int_{-1}^{\,2(x_2-\bar f(x_1))/f(x_1)}
       \theta(\xi)\,d\xi,
 \qquad
 \Ba=(\partial_2\psi,-\partial_1\psi).
\]
Then $\Ba$ is divergence free, vanishes near the channel
walls, and has flux $\Phi$ across each cross-section.
The geometric assumptions imply
\[
 |\Ba(x)|\le \frac{C|\Phi|}{f(x_1)},
 \qquad
 |\nabla \Ba(x)|\le \frac{C|\Phi|}{f(x_1)^2}.
\]
Here the weighted second-derivative bound and
$f\ge d_*>0$ ensure that
$\sup_{\mathbb R}f|f_i''|<\infty$.
Thus, by width comparability and the area bound,
\begin{equation}
 \|\Ba\|_{L^2(D)}
 +R\|\nabla \Ba\|_{L^2(D)}
 \le C|\Phi|.
 \label{eq:local-carrier-norms}
\end{equation}

The field $\Bu-\Ba$ has zero flux and zero normal trace
on the channel walls. Applying
Lemma~\ref{lem:zero-flux-poincare} and using
\eqref{eq:local-gradient-input}--\eqref{eq:local-carrier-norms},
we obtain
\begin{align*}
 \|\Bu\|_{L^2(D)}
 &\le \|\Bu-\Ba\|_{L^2(D)}+\|\Ba\|_{L^2(D)}\\
 &\le CR\bigl(
       \|\nabla \Bu\|_{L^2(D)}
       +\|\nabla \Ba\|_{L^2(D)}
       \bigr)
       +\|\Ba\|_{L^2(D)}\\
 &\le C.
\end{align*}
Together with \eqref{eq:local-gradient-input}, this proves
\eqref{eq:local-h1-scaled}.

Finally, the two-dimensional change of variables
$x=x_0+rz$ gives
\[
 \|\Bu^r\|_{L^2(D_t^{\sigma,r})}
 =\|\Bu\|_{L^2(D_t^\sigma)},
 \qquad
 \|\nabla \Bu^r\|_{L^2(D_t^{\sigma,r})}
 =r\|\nabla \Bu\|_{L^2(D_t^\sigma)}.
\]
Since $r\asymp F_\sigma(t)$, estimate
\eqref{eq:rescaled-h1-bound} follows.
\end{proof}

%%%%%%%%%%%%%%%%%%%%%%%%%%%%%%%%%%%%%%%%
%%%%%%%%%%%%%%%%%%%%%%%%%%%%%%%%%%%%%%%%%

%%%%%%%%%%%%%%%%%%%%%%%%%%%
%%%%%%%%%%%%%%%%%%%%%%%%%%%
%%%%%%%%%%%%%%%%%%%%%%%%%%%

\section{Proof of the pointwise decay estimate}
\label{sec:pointwise}

In this section, we prove Theorem~\ref{intro:main} under the notation and assumptions
of Section~1. Unless otherwise stated,
$C=C(\alpha,\Phi,\Omega)$ denotes a positive constant
independent of the observation point and the local channel width.

We first consider the right outlet and write $D_t=D_t^+$.
Taking $\sigma=+$ in Corollary~\ref{cor:rescaled-energy},
we obtain
\begin{equation}\label{eq:energy-input}
 \|\Bu\|_{L^2(D_t)}
 +f(t)\|\nabla\Bu\|_{L^2(D_t)}
 \le C,
 \qquad t\ge0.
\end{equation}
The left outlet is treated analogously. Moreover, the definition of $D_t$ and the bound
$\|f'\|_{L^\infty(\mathbb R)}\le2\beta$ imply
\[
 \frac12 f(t)\le f(x_1)\le\frac32 f(t),
 \qquad x\in D_t.
\]

For $\delta\in(0,1/2)$, define
\begin{equation}
 \Omega_{\delta f}
 =\{x\in\Omega:f_1(x_1)+\delta f(x_1)<x_2
                      <f_2(x_1)-\delta f(x_1)\}.
 \label{eq:interior-region}
\end{equation}
Applying the interior argument of Li and Sha
\cite[Proposition~5.2]{LS} with \eqref{eq:energy-input}, we obtain
\begin{equation}
 |\Bu(x)|\le \frac{C_\delta}{f(x_1)},
 \qquad x\in\Omega_{\delta f}.
 \label{eq:interior-decay}
\end{equation}
Indeed, fix an interior point $x^*$ and choose
$r\asymp\delta f(x_1^*)$ so that $B_r(x^*)\Subset\Omega$. With
\[
 \Bu^r(z)=r\Bu(x^*+rz),\qquad p^r(z)=r^2p(x^*+rz),
\]
the rescaled velocity $u^r$ solves the Navier-Stokes equations, that is,
\[
 -\Delta\Bu^r+\Bu^r\cdot\nabla\Bu^r+\nabla p^r=0,
 \qquad \operatorname{div}\Bu^r=0,
\]
on a fixed ball, and \eqref{eq:energy-input} gives a uniform local
$H^1$ bound for $\Bu^r$. In two dimensions,
$H^1\hookrightarrow L^q$ for every finite $q$; applying the local
Stokes estimate first with an exponent below $2$ and then bootstrapping
yields a uniform $H^2$ bound on a smaller ball. Hence
$\|\Bu^r\|_{L^\infty}\le C$ by Sobolev embedding, which is precisely
\eqref{eq:interior-decay} after scaling back. This is the argument of
\cite[Proposition~5.2]{LS}. No boundary condition is used here, therefore  we focus
 below on the channel walls.

\begin{proposition}[Boundary decay]
\label{prop:boundary-decay}
Suppose that the hypotheses of Theorem~\ref{intro:main}
hold, and let $\Bu$ be the weak solution specified there.
Then there exist $\delta_0\in(0,1/2)$ and $C>0$ such that
\begin{equation}
 |\Bu(x)|\le \frac{C}{f(x_1)},
 \qquad
 x\in\Omega\setminus\Omega_{\delta_0 f}.
 \label{eq:boundary-decay}
\end{equation}
The constants $\delta_0$ and $C$ are independent of $x$.
\end{proposition}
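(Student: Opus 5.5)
We fix a boundary point, say $x^0=(t,f_2(t))$ on the upper wall of the right outlet; the lower wall and the left outlet are symmetric. The plan is to rescale by $r=\ell f(t)$, where $\ell$ is a small fixed constant. We then rotate to tangent coordinates at $x^0$ and flatten the wall. The goal is to apply Lemma~\ref{lem:local-stokes} to the flattened, rescaled velocity and obtain a uniform $H^2$ bound, hence a uniform $L^\infty$ bound. Scaling back then gives $|\Bu|\le C/f(x_1)$ on a neighbourhood of $x^0$ of size comparable to $f(t)$. Choosing $\delta_0$ small relative to $\ell$ ensures these neighbourhoods cover $\Omega\setminus\Omega_{\delta_0 f}$.

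\textbf{Step 1: geometry.} Set $\Bu^r(z)=r\Bu(x^0+rQz)$ and $p^r(z)=r^2p(x^0+rQz)$, where $Q$ rotates $\Be_1$ onto the unit tangent at $x^0$. In these coordinates the wall is a graph $z_2=\zeta(z_1)$ with $\zeta(0)=\zeta'(0)=0$. The rescaled wall satisfies $|(f_2^r)''|\le CK_2r^{-1/2}$ and $|(f_2^r)'''|\le CK_3$, so on $(-2\ell,2\ell)$ in suitable units
\[
\|\zeta'\|_{L^\infty}\le C\ell K_2 r^{-1/2},\qquad \|\zeta''\|_{L^\infty}\le CK_2r^{-1/2},\qquad \|\zeta'''\|_{L^\infty}\le C(K_3+K_2^2r^{-1}).
\]
Since $r\ge \ell d_*$, these quantities are bounded, and the slope is small once $\ell$ is fixed small. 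We flatten with $y=(z_1,z_2-\zeta(z_1))$. To preserve the impermeability condition exactly, we transform the velocity by the Piola-type map $\BU=(U_1,U_2)$, with $U_2$ the normal component. Then $\BU\cdot\Bn=0$ holds on $I_1$. The divergence and the Laplacian acquire lower-order geometric terms with coefficients built from $\zeta',\zeta'',\zeta'''$.

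\textbf{Step 2: the equation for $\BU$.} The system takes the form \eqref{eq:local-stokes} with
\[
\BF=-\Bu^r\cdot\nabla\Bu^r+\mathcal{A}(\zeta)\nabla^2\BU+\mathcal{B}(\zeta)\nabla\BU+\mathcal{C}(\zeta)\BU+\mathcal{A}'(\zeta)\nabla P,\qquad G=\mathcal{D}(\zeta)\nabla \BU+\dots .
\]
Here $\mathcal A,\mathcal A'=O(\|\zeta'\|_\infty)$ are small. The Navier condition becomes $[\BD(\BU)\Bn]\cdot\Bt+\lambda U_\tau=Y$, where $\lambda=\alpha r$ is frozen at the point $x^0$. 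The datum $Y$ collects two pieces. The first is the curvature term $\kappa U_\tau$, with $\kappa=O(\zeta'')$, arising from $\BD$ on a curved wall. The second is $\lambda(\text{metric factor}-1)U_\tau$, with factor $1+O(|\zeta'|^2)$.

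The dangerous piece is the second one, since it carries the large parameter $\lambda=\alpha r$. Its size is $\lambda\,|\zeta'|^2\lesssim \alpha r\cdot \ell^2K_2^2 r^{-1}$, which is bounded; this is exactly why the weight $f^{3/2}$ appears in $K_2$. Its $H^{1/2}$ norm is controlled by $\|\BU\|_{H^1}$ times $W^{1,\infty}$ norms of the coefficients. The derivative of the coefficient is $\lambda\zeta'\zeta''=O(\ell K_2^2)$, which is also bounded.

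\textbf{Step 3: absorption.} We take a cutoff $\eta$ and apply Lemma~\ref{lem:local-stokes} on a family of half-balls $B_\rho^+$, $1/2\le\rho<\rho'\le1$, after scaling. The small terms $\mathcal A\nabla^2\BU$ and $\mathcal A'\nabla P$ are absorbed on the left once $\ell$ is small. The remaining lower-order terms are bounded by $\|\BU\|_{H^1}$, which is controlled by Corollary~\ref{cor:rescaled-energy}. The pressure term $\|P-(P)\|_{L^2}$ is controlled from the weak formulation via a Bogovski\u{\i}/Ne\v{c}as inequality by $\|\nabla\BU\|_{L^2}+\|\BU\cdot\nabla\BU\|_{H^{-1}}$ plus the boundary term $\lambda\|U_\tau\|_{L^2(I_1)}$. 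For this boundary term, note that \eqref{intro:local-energy} gives $\alpha\|\Bu\|^2_{L^2(\partial\Omega)}\le Cf^{-2}$. After rescaling this becomes $\lambda\|U_\tau\|^2_{L^2(I)}\le C$, so $\|\lambda U_\tau\|_{L^2(I)}\le C\lambda^{1/2}$, which is not bounded.

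This is the main obstacle. To handle it, we normalize the pressure using test fields supported away from the wall, where no boundary term appears. This gives local $L^2$ control of $P$ minus its mean up to the wall through the Ne\v{c}as inequality for fields vanishing on the flat part. Any remaining $\lambda$-dependence is handled as in \cite{TACG}, where the friction term has a favorable sign in the tangential difference quotient.

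For the convective term, $H^1\hookrightarrow L^q$ in two dimensions gives $\Bu^r\cdot\nabla\Bu^r\in L^{s}$ for every $s<2$. We therefore first run the estimate in $W^{2,s}$, or use the interpolation $\|\BU\cdot\nabla\BU\|_{L^2}\le \|\BU\|_{L^\infty}\|\nabla\BU\|_{L^2}\le C\|\BU\|_{H^1}^{1+\theta}\|\BU\|_{H^2}^{1-\theta}\cdots$ followed by Young's inequality, in order to absorb it. The standard iteration lemma over $\rho<\rho'$ removes the absorbed terms and yields $\|\BU\|_{H^2(B^+_{1/2})}\le C$. Sobolev embedding then gives $|\BU|\le C$, hence $|\Bu^r|\le C$. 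Scaling back gives \eqref{eq:boundary-decay}, since $f(x_1)\asymp f(t)$ in the neighbourhood.
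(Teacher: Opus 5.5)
Your proposal is essentially the paper's proof. It rescales by the local width, rotates and shear-flattens the wall, and removes the normal trace by the Piola-type correction, which is exactly the paper's $\Bv=\Bw-(0,qw_1)$. It freezes $\lambda=\alpha r$ and moves $\alpha r(1-J^{-1})v_1$ into the boundary datum, where the $f^{3/2}$ weight in $K_2$ makes it bounded in $W^{1,\infty}$. It normalizes the pressure with $H^1_0$ Bogovski\u{\i} fields so the $O(\lambda^{1/2})$ friction term never enters, and then applies Lemma~\ref{lem:local-stokes} with Gagliardo--Nirenberg/Young and absorbs the small $O(\|\zeta'\|_{L^\infty})$ terms.

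Two details need fixing. First, the shear flattening also puts the top-order term $-a\,\partial_1U_1$, with $a=\zeta'/(1+(\zeta')^2)$, into $Y$. This term is not lower order, so it must be absorbed via $\|a\,\partial_1U_1\|_{H^{1/2}(I_1)}\le C\|a\|_{L^\infty}\|\nabla^2\BU\|_{L^2}+C\|a\|_{W^{1,\infty}}\|\BU\|_{H^1}$. Second, a chart centred at $(t,f_2(t))$ does not lie in $D_t$, so before invoking Corollary~\ref{cor:rescaled-energy} you must centre it inside some $D_{t'}$; the paper takes $t=t'-\tfrac12\beta_*f(t')$.
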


\begin{proof}
It suffices to treat a point $x^*=(x_1^*,x_2^*)$ with $x_1^*\ge0$
near the upper wall. The other wall and the left outlet are handled
in the same way, using the corresponding local energy bound.
Thus we assume
\begin{equation}
 0<f_2(x_1^*)-x_2^*\le\delta_0 f(x_1^*).
 \label{eq:point-near-wall}
\end{equation}

\medskip
\noindent\textit{Step 1. Scaling and a uniform boundary chart.}
Choose $t>0$ so that
\[
 x_1^*=t-\tfrac12\beta_*f(t).
\]
Such a $t$ exists because the function on the right is strictly
increasing and tends to infinity. Put
\[
 r=f(x_1^*),\qquad \bar x^*=(x_1^*,f_2(x_1^*)),\qquad
 D_t^r=(D_t-\bar x^*)/r.
\]
Width comparability gives $r\asymp f(t)$. Define
\begin{equation}
 \Bu^r(z)=r\,\Bu(\bar x^*+rz),\qquad
 p^r(z)=r^2p(\bar x^*+rz).
 \label{eq:scaling}
\end{equation}
Let 
\[
 \Gamma_t^r
 :=\left\{\frac{x-\bar{x}^*}{r}:
 x\in\partial\Omega\cap\partial D_t\right\}
\]
denote the portion of $\partial D_t^r$ corresponding
to the channel walls.
Then
\begin{equation}
 \begin{cases}
 -\Delta \Bu^r+\Bu^r\cdot\nabla \Bu^r+\nabla p^r=0,
       &\text{in }D_t^r,\\
 \diver \Bu^r=0,&\text{in }D_t^r,\\
 \Bu^r\cdot \Bn=0,\quad [\BD(\Bu^r)\Bn]\cdot\Bt+\alpha r\,u^r_\tau=0,
       &\text{on }\Gamma_t^r.
 \end{cases}
 \label{eq:scaled-system}
\end{equation}
By Corollary~\ref{cor:rescaled-energy},
with $\sigma=+$ and $x_0=\bar x^*$, we have
\begin{equation}
 \|\Bu^r\|_{H^1(D_t^r)}\le C.
 \label{eq:scaled-h1}
\end{equation}

Choose a rotation $\cR$ that maps the outward normal at $z=0$ to
$-\Be_2$, and set
\begin{equation}
 y=\cR z,\qquad
 \Bu_R(y)=\cR \Bu^r(\cR^{-1}y),\qquad
 p_R(y)=p^r(\cR^{-1}y).
 \label{eq:rotation}
\end{equation}
By rotational invariance, setting
$D_R=\cR D_t^r$ and $\Gamma_R=\cR\Gamma_t^r$,
we obtain
\begin{equation}
 \begin{cases}
 -\Delta \Bu_R+\nabla p_R=\Bf_R, &\text{in }D_R,\\
 \diver \Bu_R=0, &\text{in }D_R,\\
 \Bu_R\cdot \Bn=0, &\text{on }\Gamma_R,\\
 [\BD_y(\Bu_R)\Bn]\cdot\Bt+\alpha r\,\Bu_R\cdot\Bt=0,
      &\text{on }\Gamma_R,
 \end{cases}
 \qquad \Bf_R=-\Bu_R\cdot\nabla \Bu_R.
 \label{eq:rotated-system}
\end{equation}
No additional boundary conditions are prescribed on
the artificial end sections.
By the choice of $\cR$, the rotated image of the upper
wall is locally represented by
\[
 y_2=\zeta(y_1),\qquad \zeta(0)=\zeta'(0)=0,
\]
with the fluid occupying the side $y_2>\zeta(y_1)$.

We next establish a uniform size for this local chart
and the derivative bounds needed below.
The scaled wall functions are
\[
 f_i^r(z_1)=r^{-1}\bigl(f_i(x_1^*+rz_1)-f_2(x_1^*)\bigr).
\]
Writing $X_1=x_1^*+rz_1$, the chain rule gives
\[
 (f_i^r)'=f_i'(X_1),\qquad
 (f_i^r)''=r f_i''(X_1),\qquad
 (f_i^r)'''=r^2 f_i'''(X_1).
\]
By local width comparability, $f(X_1)\asymp r$. Thus
\eqref{intro:basic-geometry}--\eqref{intro:K23} imply
\[
 |(f_i^r)''|\le \frac{K_2r}{f(X_1)^{3/2}},\qquad
 |(f_i^r)'''|\le \frac{K_3r^2}{f(X_1)^2},
\]
and consequently
\begin{equation}
 |(f_i^r)'|\le\beta,\qquad
 |(f_i^r)''|\le Cr^{-1/2},\qquad
 |(f_i^r)'''|\le C.
 \label{eq:scaled-wall-derivatives}
\end{equation}

To make the rotated graph estimates explicit, put
\[
 F=f_2^r,\qquad m=F'(0),\qquad L=(1+m^2)^{1/2}.
\]
Then $F(0)=0$, $|m|\le\beta$, and
\[
 1\le L\le(1+\beta^2)^{1/2}.
\]
The preceding estimates for the scaled wall functions give,
on a fixed interval $(-\rho_0,\rho_0)$,
\[
 \|F''\|_{L^\infty(-\rho_0,\rho_0)}
 \le C_gK_2r^{-1/2},
 \qquad
 \|F'''\|_{L^\infty(-\rho_0,\rho_0)}
 \le C_gK_3,
\]
where $\rho_0>0$ and $C_g$ are independent of $x^*$ and $r$.

Choose
\[
 \cR=\frac1L\begin{pmatrix}-1&-m\\m&-1\end{pmatrix},\qquad
 y_1(z_1)=\frac{-z_1-mF(z_1)}L,\qquad
 y_2(z_1)=\frac{mz_1-F(z_1)}L.
\]
This rotation sends the outward normal $L^{-1}(-m,1)^{\mathsf T}$
to $-\Be_2$. Set
\[
 T=y_1'=-\frac{1+mF'}L,\qquad b=-\frac mL.
\]
Then $T(0)=-L$, $T'=bF''$, $|b|\le1$, and
\[
 |T(z_1)-T(0)|
 \le |b|\|F''\|_{L^\infty(-\rho_0,\rho_0)}|z_1|
 \le C_gK_2d_*^{-1/2}|z_1|.
\]
With $M_0=C_gK_2d_*^{-1/2}$, choose
\[
 \rho=\min\left\{\frac{\rho_0}{2},\frac1{2(1+M_0)}\right\}.
\]
Thus $T\le-1/2$ on $[-\rho,\rho]$. Since $y_1(0)=0$,
\[
 y_1(\rho)\le-\rho/2,\qquad y_1(-\rho)\ge\rho/2.
\]
The map $z_1\mapsto y_1(z_1)$ therefore has a $C^3$ inverse on an
interval containing $(-\rho/2,\rho/2)$. Define
$\zeta(y_1)=y_2(z_1(y_1))$ there. Using $d/dy_1=T^{-1}d/dz_1$,
we obtain
\[
 \zeta'=\frac{m-F'}{LT},\qquad
 \zeta''=\frac{F''}{T^3},\qquad
 \zeta'''=\frac{F'''}{T^4}-\frac{3b(F'')^2}{T^5},
\]
where the right-hand sides are evaluated at $z_1=z_1(y_1)$.
In particular, $\zeta(0)=\zeta'(0)=0$, and
\[
 |\zeta''|\le CK_2r^{-1/2},\qquad
 |\zeta'''|\le C(K_3+K_2^2r^{-1})\le C.
\]

The rotated graph is defined on $(-\rho/2,\rho/2)$,
and the opposite wall and the artificial strip ends
remain a uniform positive distance from the origin.
Choose
\[
 0<\ell<\min\left\{\frac1{16},\frac{\rho}{4}\right\}
\]
sufficiently small that the local chart avoids the
opposite wall and the artificial end sections.
This choice depends only on the fixed geometric
parameters and is independent of $x^*$ and $r$.
The preceding derivative bounds imply
\begin{equation}
 \|\zeta''\|_{L^\infty(-2\ell,2\ell)}
 \le Cr^{-1/2},
 \qquad
 \|\zeta'''\|_{L^\infty(-2\ell,2\ell)}
 \le C.
 \label{eq:graph-higher}
\end{equation}
Writing $s_1=y_1$ for the tangential coordinate used
after flattening, we obtain from $\zeta'(0)=0$ that
\begin{equation}
 \begin{aligned}
 |\zeta'(s_1)|
 &=\left|\int_0^{s_1}\zeta''(\xi)\,d\xi\right|\\
 &\le |s_1|\,
       \|\zeta''\|_{L^\infty(-2\ell,2\ell)}
 \le C|s_1|r^{-1/2},
 \qquad |s_1|<2\ell.
 \end{aligned}
 \label{eq:graph-slope}
\end{equation}
In particular, since $r\ge d_*$,
\[
 \|\zeta'\|_{L^\infty(-2\ell,2\ell)}
 \le 2C\ell r^{-1/2}
 \le 2C\ell d_*^{-1/2}.
\]
The constant $C$ is independent of $\ell$.
Thus $\ell$ can be further reduced, independently
of $x^*$ and $r$, to satisfy the smallness condition
required in Step~5.

Write
\begin{align*}
 \omega&=\{y:|y_1|<\ell,\ \zeta(y_1)<y_2<\zeta(y_1)+\ell\},\\
 \omega'&=\{y:|y_1|<\ell/2,\
                \zeta(y_1)<y_2<\zeta(y_1)+\ell/2\}.
\end{align*}
These domains lie in the rotated strip. The point corresponding to
$x^*$ satisfies $|y^*|=|z^*|\le\delta_0$. By reducing $\delta_0$
in terms of the fixed chart size, we ensure $y^*\in\omega'$.
The scaling and rotation are illustrated in Figure~\ref{fig:scaled-rotation}.

% Requires TikZ (already loaded in the manuscript).
\begin{figure}[htbp]
\centering
\begin{tikzpicture}[x=1.65cm,y=1.65cm,>=stealth,
                    every node/.style={font=\small}]
% Left: a schematic scaled strip; the right panel is its exact rotation.
\begin{scope}[shift={(0,0)}]
 \fill[gray!10] plot[domain=-1.25:1.25,samples=60]
   (\x,{.4*\x-.12*\x*\x})
   -- plot[domain=1.25:-1.25,samples=60]
   (\x,{.4*\x-.12*\x*\x-1}) -- cycle;
 \draw[thick] plot[domain=-1.25:1.25,samples=60]
   (\x,{.4*\x-.12*\x*\x});
 \draw[thick] plot[domain=-1.25:1.25,samples=60]
   (\x,{.4*\x-.12*\x*\x-1});
 \draw[densely dotted] (-1.25,-.6875)--(-1.25,-1.6875)
                          (1.25,.3125)--(1.25,-.6875);
 \draw[gray,dashed] (0,0) circle[radius=.65];
 \draw[gray,dash dot] (-.8,-.32)--(.8,.32);
 \draw[->] (-1.45,0)--(1.55,0) node[right] {$z_1$};
 \draw[->] (0,-1.85)--(0,.95) node[above] {$z_2$};
 \draw[->,thick] (0,0)--(-.22,.55) node[above left] {$\Bn_z(0)$};
 \fill (0,0) circle(1.2pt) node[below left] {$0$};
 \fill (0,-.20) circle(1.2pt) node[right] {$z^*$};
 \fill (0,-1) circle(1.2pt) node[below right] {$(0,-1)$};
 \node at (-.72,-.72) {$D_t^r$};
 \node[above] at (1,.28) {$f_2^r$};
 \node[below] at (.8,-.7568) {$f_1^r$};
 \node at (0,-2.13) {(a) Scaled coordinates};
\end{scope}
\draw[->,thick] (1.8,-.3)--(2.65,-.3)
 node[midway,above] {$\mathcal R$};
% R=[-1,-m;m,-1]/sqrt(1+m^2), m=0.4.
\begin{scope}[shift={(4.5,-.85)}]
 \begin{scope}[rotate=158.1985905]
  \fill[gray!10] plot[domain=-1.25:1.25,samples=60]
    (\x,{.4*\x-.12*\x*\x})
    -- plot[domain=1.25:-1.25,samples=60]
    (\x,{.4*\x-.12*\x*\x-1}) -- cycle;
  \draw[thick] plot[domain=-1.25:1.25,samples=60]
    (\x,{.4*\x-.12*\x*\x});
  \draw[thick] plot[domain=-1.25:1.25,samples=60]
    (\x,{.4*\x-.12*\x*\x-1});
  \draw[densely dotted] (-1.25,-.6875)--(-1.25,-1.6875)
                           (1.25,.3125)--(1.25,-.6875);
 \end{scope}
 % Exact inverse of y_1=(-1.16 z_1+.048 z_1^2)/sqrt(1.16).
 \def\wallheight#1{.12/sqrt(1.16)*
    (-2*sqrt(1.16)*(#1)/(1.16+sqrt(1.3456+.192*sqrt(1.16)*(#1))))^2}
 \fill[gray!30] plot[domain=-.42:.42,samples=35]
    (\x,{\wallheight{\x}})
    -- plot[domain=.42:-.42,samples=35]
    (\x,{\wallheight{\x}+.42}) -- cycle;
 \draw plot[domain=-.42:.42,samples=35]
    (\x,{\wallheight{\x}+.42});
 \draw (-.42,{\wallheight{-.42}})--(-.42,{\wallheight{-.42}+.42})
        (.42,{\wallheight{.42}})--(.42,{\wallheight{.42}+.42});
 \draw[gray,dashed] (0,0) circle[radius=.65];
 \draw[->] (-1.5,0)--(1.8,0) node[right] {$y_1$};
 \draw[->] (0,-.8)--(0,1.65) node[above] {$y_2$};
 \draw[->,thick] (0,0)--(0,-.48) node[right] {$-\Be_2$};
 \fill (0,0) circle(1.2pt) node[below left] {$0$};
 \fill ({.08/sqrt(1.16)},{.2/sqrt(1.16)}) circle(1.2pt)
       node[right] {$y^*$};
 \fill ({.4/sqrt(1.16)},{1/sqrt(1.16)}) circle(1.2pt);
 \node at (-.22,.29) {$\omega$};
 \node at (.9,.65) {$D_R$};
 \node[below right] at (.7,{\wallheight{.7}}) {$y_2=\zeta(y_1)$};
 \node at (.15,-1.28) {(b) Rotated coordinates};
\end{scope}
\end{tikzpicture}
\caption{Rotation of the scaled channel near the upper wall.
The map $\mathcal R$ sends the outward normal at the origin to
$-\Be_2$, so the fluid lies above $y_2=\zeta(y_1)$ and
$\zeta(0)=\zeta'(0)=0$. The shaded local domain is $\omega$;
the dashed circles indicate corresponding neighbourhoods of the origin.
The diagram is schematic.}
\label{fig:scaled-rotation}
\end{figure}

\medskip
\noindent\textit{Step 2. Pressure normalization.}

We estimate the pressure oscillation on $\omega$ using
the divergence-lifting argument in \cite[Remark~5.1]{LS}.
By the bounds for $\zeta'$ and $\zeta''$, the domain $\omega$
is the image of a fixed rectangle under a volume-preserving
$C^2$ diffeomorphism whose first and second derivatives,
as well as the first derivatives of its inverse,
are uniformly bounded.
The  Bogovski\u\i\ map gives a solution to the divergence equations, see \cite{Bo,Ga}.
For every
\[
 \chi\in L^2_0(\omega)
 :=\left\{\chi\in L^2(\omega):
                  \int_\omega\chi=0\right\},
\]
there exists
$\Bvphi_\chi\in H^1_0(\omega;\R^2)$ such that
\begin{equation}
 \diver\Bvphi_\chi=\chi,
 \qquad
 \norm{\nabla\Bvphi_\chi}{L^2(\omega)}
 \le C\norm{\chi}{L^2(\omega)}.
 \label{eq:pressure-lifting}
\end{equation}
The constant depends on the fixed chart size $\ell$
and the geometric bounds, but is independent of $x^*$ and $r$.

Testing
$-\Delta\Bu_R+\nabla p_R=\Bf_R$
with $\Bvphi_\chi$, and using $\int_\omega\chi=0$, yields
\begin{equation}
 \int_\omega
   \bigl(p_R-\avg{p_R}{\omega}\bigr)\chi
 =
 \int_\omega
   \nabla\Bu_R:\nabla\Bvphi_\chi
 -
 \langle\Bf_R,\Bvphi_\chi\rangle.
 \label{eq:pressure-test}
\end{equation}
No boundary term occurs since $\Bvphi_\chi$ has zero trace
on $\partial\omega$.
By \eqref{eq:pressure-lifting} and duality,
\begin{equation}
 \norm{p_R-\avg{p_R}{\omega}}{L^2(\omega)}
 \le C\left(
       \norm{\nabla\Bu_R}{L^2(\omega)}
       +\norm{\Bf_R}{H^{-1}(\omega)}
       \right),
 \label{eq:pressure-dual-bound}
\end{equation}
where $H^{-1}(\omega)$ denotes the dual of
$H^1_0(\omega;\R^2)$ equipped with the gradient norm.
In particular, if $\Bf_R\in L^2(\omega;\R^2)$,
the Poincar\'e inequality gives
\begin{equation}
 \norm{p_R-\avg{p_R}{\omega}}{L^2(\omega)}
 \le C\left(
       \norm{\nabla\Bu_R}{L^2(\omega)}
       +\norm{\Bf_R}{L^2(\omega)}
       \right).
 \label{eq:pressure-force-bound}
\end{equation}

For the present nonlinear force,
the identity $\diver\Bu_R=0$ implies
\[
 \Bf_R=-\Bu_R\cdot\nabla\Bu_R
      =-\diver(\Bu_R\otimes\Bu_R).
\]
Hence, for every $\Bpsi\in H^1_0(\omega;\R^2)$,
\[
 \left|\langle\Bf_R,\Bpsi\rangle\right|
 =
 \left|
 \int_\omega(\Bu_R\otimes\Bu_R):\nabla\Bpsi
 \right|
 \le
 \norm{\Bu_R}{L^4(\omega)}^2
 \norm{\nabla\Bpsi}{L^2(\omega)}.
\]
The two-dimensional Sobolev inequality, with a uniform constant
provided by the graph map, gives
\[
 \norm{\Bf_R}{H^{-1}(\omega)}
 \le C\norm{\Bu_R}{H^1(\omega)}^2.
\]
Combining this bound with \eqref{eq:pressure-dual-bound}
and \eqref{eq:scaled-h1}, we obtain
\begin{equation}
 \norm{p_R-\avg{p_R}{\omega}}{L^2(\omega)}
 \le C\left(
       \norm{\nabla\Bu_R}{L^2(\omega)}
       +\norm{\Bu_R}{H^1(\omega)}^2
       \right)
 \le C.
 \label{eq:pressure-bound}
\end{equation}

We now replace $p_R$ by $p_R-\avg{p_R}{\omega}$
and retain the notation $p_R$.
This normalization leaves the equations and boundary
conditions unchanged. Thus, 
\[
 \avg{p_R}{\omega}=0,
 \qquad
 \norm{p_R}{L^2(\omega)}\le C.
\]

\medskip
\noindent\textit{Step 3. Flattening the boundary.}

Set
\[
 Q=(-\ell,\ell)\times(0,\ell),
 \qquad
 Q'=(-\ell/2,\ell/2)\times(0,\ell/2),
\]
and define
\[
 \BPsi(s)=(s_1,s_2+\zeta(s_1)).
\]
Then $\BPsi(Q)=\omega$, $\BPsi(Q')=\omega'$, and
$\det D\BPsi=1$.
Introduce the flattened coordinates and fields by
\begin{equation}
 s_1=y_1,\qquad
 s_2=y_2-\zeta(y_1),\qquad
 \ut(s)=\Bu_R(\BPsi(s)),\qquad
 \pt(s)=p_R(\BPsi(s)).
 \label{eq:flattening}
\end{equation}
Here $p_R$ denotes the pressure normalized in Step~2.

Set
\[
 I=(-\ell,\ell)\times\{0\},\qquad
 q=\zeta',\qquad
 J=(1+q^2)^{1/2},
\]
and write $\partial_j=\partial/\partial s_j$ below.
The uniform bounds for the coordinate map,
\eqref{eq:scaled-h1}, and the pressure estimate in Step~2 give
\begin{equation}
 M:=\norm{\ut}{H^1(Q)}\le C,\qquad
 P_0:=\norm{\pt}{L^2(Q)}
     =\norm{p_R}{L^2(\omega)}\le C.
 \label{eq:flat-low-norms}
\end{equation}
The derivative relations
$\partial_{y_1}=\partial_1-q\partial_2$ and
$\partial_{y_2}=\partial_2$ give, componentwise,
\begin{align*}
 (\Delta_y \Bu_R)\circ\BPsi
   &=\Delta\ut-2q\partial_{12}\ut+q^2\partial_{22}\ut
                                           -q'\partial_2\ut,\\
 (\nabla_y p_R)\circ\BPsi
   &=\nabla\pt-(q\partial_2\pt,0)^{\mathsf T},\\
 ((\Bu_R\cdot\nabla_y)\Bu_R)\circ\BPsi
   &=\ut\cdot\nabla\ut-q\widehat{u}_1\partial_2\ut.
\end{align*}
Substituting these identities into \eqref{eq:rotated-system} gives the transformed force 
\begin{align}
 \widehat \Bf={}&-\ut\cdot\nabla\ut+q\widehat{u}_1\partial_2\ut
       +(q\partial_2\pt,0)^{\mathsf T}
       -2q\partial_{12}\ut+q^2\partial_{22}\ut-q'\partial_2\ut.
 \label{eq:flat-force}
\end{align}

We next compute the boundary condition. Since the fluid lies
above the graph,
\[
 \Bn=J^{-1}(q,-1)^{\mathsf T},\qquad
 \Bt=J^{-1}(1,q)^{\mathsf T}.
\]
The impermeability condition and the divergence equation imply
\begin{gather}
 \widehat{u}_2=q\widehat{u}_1,\qquad
 \partial_1\widehat{u}_2=q'\widehat{u}_1+q\partial_1\widehat{u}_1
       \quad\text{on }I,\label{eq:trace-identities}\\
 \partial_2\widehat{u}_2=-
 \partial_1\widehat{u}_1+q\partial_2\widehat{u}_1
       \quad\text{in }Q.\label{eq:normal-divergence}
\end{gather}
Substituting these identities into the tangential stress gives
\[
 [\BD_y(\Bu_R)\Bn]\cdot\Bt
 =-\frac{J^2}{2}\partial_2\widehat{u}_1+q\partial_1\widehat{u}_1
        -\frac{(1-q^2)q'}{2J^2}\widehat{u}_1.
\]
As $\Bu_R\cdot\Bt=J\widehat{u}_1$ on $I$, the Navier slip boundary condition becomes
\begin{equation}
 \partial_2\widehat{u}_1-\frac{2\alpha r}{J}\widehat{u}_1
 =\frac{2q}{J^2}\partial_1\widehat{u}_1
       -\frac{(1-q^2)q'}{J^4}\widehat{u}_1.
 \label{eq:flat-navier}
\end{equation}
Thus, the flattened system is
\begin{equation}
 \begin{cases}
 -\Delta\ut+\nabla\pt=\widehat \Bf, &\text{in }Q,\\
 \diver\ut=\widehat g:=q\partial_2\widehat{u}_1, &\text{in }Q,\\
 \widehat{u}_2=q\widehat{u}_1, &\text{on }I,\\
 \displaystyle\partial_2\widehat{u}_1-\frac{2\alpha r}{J}\widehat{u}_1
 =\frac{2q}{J^2}\partial_1\widehat{u}_1
           -\frac{(1-q^2)q'}{J^4}\widehat{u}_1, &\text{on }I,
 \end{cases}
 \label{eq:flattened-system}
\end{equation}
with $\widehat \Bf$ given by \eqref{eq:flat-force}.
For a flat wall, $q\equiv0$ and $J\equiv1$, so the boundary
condition reduces to
$-\tfrac12\partial_2\widehat u_1+\alpha r\widehat u_1=0$,
as expected from the outward normal $-\Be_2$
and the definition of $\BD$.

These identities are justified by local regularity with $\alpha r$
fixed. The Stokes system with  Navier slip boundary conditions
satisfies the complementing boundary condition: after freezing the
coefficients in tangent coordinates, its principal boundary operators
are $U_2=0$ and $\partial_2U_1=0$. The corresponding homogeneous
half-space problem has no nonzero decaying tangential Fourier mode,
as follows by integration by parts. The friction term is of lower
order and does not affect this condition. Thus the local elliptic
regularity theory of \cite{ADN64} applies in two dimensions on the
$C^3$ wall patches; compare
\cite[Theorem~5.10 and Remark~5.11]{TACG}.

For completeness, localize within a larger wall patch using a smooth
cut-off  function $\xi$ with $\partial_n\xi=0$ at the wall. The divergence datum
is $\nabla\xi\cdot\Bu_R\in W^{1,4/3}$, the commutator forces belong
to $L^{4/3}$, and the Navier slip boundary condition is preserved by this cut-off.
Since $\Bu_R\in H^1$ and $p_R\in L^2$ locally up to the wall,
\[
 \Bu_R\cdot\nabla\Bu_R\in L^{4/3}
 \quad\Longrightarrow\quad
 \Bu_R\in W^{2,4/3},\quad p_R\in W^{1,4/3}
\]
on a smaller patch. In two dimensions,
$W^{2,4/3}\hookrightarrow W^{1,4}\hookrightarrow L^\infty$,
so the nonlinear force belongs to $L^2$ there.
Repeating the localization with exponent $2$ yields
$\Bu_R\in H^2$ and $p_R\in H^1$ near the support of the cut-off function
used below. These regularity bounds may depend on $\alpha r$.
The estimates in Step~5 are uniform in $x^*$ and $r$ for fixed $\alpha$.

\medskip
\noindent\textit{Step 4. Localization and a constant friction parameter.}

Choose a smooth cut-off
$\varphi:\overline{\R^2_+}\to[0,1]$
such that $\varphi=1$ on $Q'$,
$\operatorname{supp}\varphi\subset Q\cup I$, and
\[
 \partial_2\varphi=0 \quad\text{on }I,\qquad
 |\nabla\varphi|\le C\ell^{-1},\qquad
 |\nabla^2\varphi|\le C\ell^{-2}.
\]
The support is compact in the closed half-plane and
avoids the lateral and top edges of $Q$.
Define
\begin{equation}
 \Bw=\varphi\ut,\qquad
 \pi=\varphi\pt,\qquad
 \wt=(0,qw_1)^{\mathsf T},\qquad
 \Bv=\Bw-\wt.
 \label{eq:localized-fields}
\end{equation}
Extend these fields by zero to $B_1^+\setminus Q$.
Their supports are contained in
$B_{1/2}\cap\overline{\R^2_+}$.

For the coefficient estimates below, choose
$\vartheta\in C_c^\infty((-2,2))$ such that
$0\le\vartheta\le1$ and $\vartheta=1$ on $[-1,1]$.
Extend $q$ to $\R$ by setting
\[
 q(s_1)=
 \begin{cases}
  \vartheta(s_1/\ell)\zeta'(s_1),
       & |s_1|<2\ell,\\
  0,   & |s_1|\ge2\ell.
 \end{cases}
\]
We retain the notation $q$ for this extension, which agrees
with $\zeta'$ on $[-\ell,\ell]$.
By \eqref{eq:graph-higher}--\eqref{eq:graph-slope},
\[
 \|q\|_{L^\infty(\R)}\le C\ell r^{-1/2},
 \qquad
 \|q'\|_{L^\infty(\R)}\le Cr^{-1/2},
 \qquad
 \|q''\|_{L^\infty(\R)}\le C_\ell,
\]
where $C$ is independent of $\ell$, $x^*$, and $r$,
and $C_\ell$ is independent of $x^*$ and $r$.

We first write the system for $\Bw$, before correcting its normal trace.
The product rules give
\begin{align*}
 -\Delta(\varphi\ut)+\nabla(\varphi\pt)
  &=-\ut\Delta\varphi-2\nabla\varphi\cdot\nabla\ut
                          +\pt\nabla\varphi+\varphi\widehat \Bf,\\
 \diver(\varphi\ut)
  &=\ut\cdot\nabla\varphi+\varphi q\partial_2\widehat{u}_1.
\end{align*}
For compactness, set
\begin{equation}
 \begin{aligned}
 \Bf&=-\ut\Delta\varphi-2\nabla\varphi\cdot\nabla\ut
                          +\pt\nabla\varphi+\varphi\widehat \Bf,\\
 g&=\ut\cdot\nabla\varphi+\varphi q\partial_2\widehat{u}_1,\\
 a&=\frac{q}{J^2},\qquad c=\frac{(1-q^2)q'}{J^4},\qquad
 E_\varphi=(\partial_2\varphi-2a\partial_1\varphi)\widehat{u}_1.
 \end{aligned}
 \label{eq:localized-data}
\end{equation}
The vector $\Bf$ here denotes the localized force; the scalar channel
width $f(x_1)$ always carries its spatial argument when needed below.
Multiplying  the last equation of \eqref{eq:flattened-system}
by $\varphi$ gives
\[
 \partial_2w_1-\frac{2\alpha r}{J}w_1
 =2a(\partial_1w_1-\partial_1\varphi\widehat{u}_1)
                           -cw_1+\partial_2\varphi\widehat{u}_1.
\]
Thus, with $I_1=(-1,1)\times\{0\}$, the localized system is
\begin{equation}
 \begin{cases}
 -\Delta \Bw+\nabla\pi=\Bf, &\text{in }B_1^+,\\
 \diver \Bw=g, &\text{in }B_1^+,\\
 w_2=qw_1, &\text{on }I_1,\\
 \displaystyle\partial_2w_1-\frac{2\alpha r}{J}w_1
       =2a\partial_1w_1-cw_1+E_\varphi, &\text{on }I_1.
 \end{cases}
 \label{eq:w-system}
\end{equation}
All localized data are understood to be zero outside their supports.

Next, the correction $\wt=(0,qw_1)^{\mathsf T}$ satisfies
\[
 \diver\wt=q\partial_2w_1
       =q\partial_2\varphi\widehat{u}_1+q\varphi\partial_2\widehat{u}_1,
 \qquad (\wt)_2=qw_1=w_2\quad\text{on }I_1.
\]
Subtracting this field removes the normal trace. Since $v_1=w_1$,
the equations become
\begin{equation}
 \begin{cases}
 -\Delta \Bv+\nabla\pi=\BF:=\Bf+\Delta\wt, &\text{in }B_1^+,\\
 \diver \Bv=G:=g-\diver\wt, &\text{in }B_1^+,\\
 v_2=0, &\text{on }I_1,\\
 \displaystyle\partial_2v_1-\frac{2\alpha r}{J}v_1
       =2a\partial_1v_1-cv_1+E_\varphi, &\text{on }I_1.
 \end{cases}
 \label{eq:localized-bulk}
\end{equation}
More explicitly,
\begin{equation}
 \begin{aligned}
 G&=\ut\cdot\nabla\varphi-q\widehat{u}_1\partial_2\varphi,\\
 \Delta\wt&=(0,q''w_1+2q'\partial_1w_1+q\Delta w_1)^{\mathsf T}.
 \end{aligned}
 \label{eq:G-and-correction}
\end{equation}
In particular, the derivative in the middle term of $\Delta\wt$
is tangential. A useful expansion of $\BF$ is
\begin{align}
 \BF={}&-\ut\Delta\varphi-2\nabla\varphi\cdot\nabla\ut
       +\pt\nabla\varphi
       +\varphi\bigl(-\ut\cdot\nabla\ut
                    +q\widehat{u}_1\partial_2\ut-q'\partial_2\ut\bigr)
       \notag\\
 &-(q\pt\partial_2\varphi,0)^{\mathsf T}
       +2q\bigl(\partial_1\varphi\partial_2\ut
              +\partial_2\varphi\partial_1\ut
              +\partial_{12}\varphi\ut\bigr)
       \notag\\
 &-q^2\bigl(2\partial_2\varphi\partial_2\ut
                       +\partial_{22}\varphi\ut\bigr)
       -2q\partial_{12}\Bw+q^2\partial_{22}\Bw
       +(q\partial_2\pi,0)^{\mathsf T}
       \notag\\
 &+(0,q''w_1+2q'\partial_1w_1+q\Delta w_1)^{\mathsf T}.
 \label{eq:F-expanded}
\end{align}

Finally, we replace the variable friction coefficient by a constant.
Set
\begin{equation}
 b_r=\frac{\alpha r(J-1)}{J},\qquad
 \frac{\alpha r}{J}=\alpha r-b_r.
 \label{eq:boundary-coefficients}
\end{equation}
The last equation in \eqref{eq:localized-bulk} then reads
\[
 \partial_2v_1-2\alpha rv_1
       =2a\partial_1v_1-cv_1+E_\varphi-2b_rv_1.
\]
With $\Bn_0=-\Be_2$ and $\Bt_0=\Be_1$, the condition $v_2=0$ implies
$[\BD_s(\Bv)\Bn_0]\cdot\Bt_0=-\tfrac12\partial_2v_1$ on $I_1$.
Consequently, the final system takes precisely the form required
in Lemma~\ref{lem:local-stokes}:
\begin{equation}
 \begin{cases}
 -\Delta \Bv+\nabla\pi=\BF, &\text{in }B_1^+,\\
 \diver \Bv=G, &\text{in }B_1^+,\\
 \Bv\cdot \Bn_0=0, &\text{on }I_1,\\
 [\BD_s(\Bv)\Bn_0]\cdot\Bt_0+\alpha r v_1=h,
       &\text{on }I_1,
 \end{cases}
 \label{eq:localized-boundary}
\end{equation}
where
\begin{equation}
 h=-a\partial_1v_1+\tfrac12cv_1-\tfrac12E_\varphi+b_rv_1.
 \label{eq:final-boundary-data}
\end{equation}
The parameter on the left is now the constant $\lambda=\alpha r\ge0$;
$\BF$, $G$, and $h$ are given explicitly above, so each term can be
estimated separately.

The geometric assumptions give
\begin{gather}
 J-1=\frac{q^2}{J+1},\qquad J'=\frac{qq'}{J},\qquad
 \norm{J-1}{L^\infty}+\norm{J'}{L^\infty}\le Cr^{-1},
 \label{eq:J-bounds}\\
 b_r'=\alpha r\frac{J'}{J^2},\qquad
 \norm{b_r}{W^{1,\infty}}\le C\alpha,\qquad
 \norm{c}{W^{1,\infty}}\le C_\ell.
 \label{eq:coefficient-bounds}
\end{gather}
Indeed, the bounds for the extended coefficient $q$ give
\[
 \|b_r\|_{L^\infty(\R)}
 \le C\alpha r\|q\|_{L^\infty(\R)}^2
 \le C\alpha\ell^2,
 \qquad
 \|b_r'\|_{L^\infty(\R)}
 \le C\alpha r
       \|q\|_{L^\infty(\R)}
       \|q'\|_{L^\infty(\R)}
 \le C\alpha\ell.
\]
The derivative of $c$ involves $q''$, which is uniformly
bounded for the fixed chart size $\ell$.
Thus the coefficient bounds are independent of $x^*$ and $r$.

\medskip
\noindent\textit{Step 5. Uniform local estimates.}
All unlabelled norms in this step are taken on $B_1^+$ for the
zero-extended localized fields, on $Q$ for $\ut,\pt$, and on
$I_1=(-1,1)\times\{0\}$ for boundary data. Put
\[
 \eta=\norm{q}{L^\infty},\qquad
 X=\norm{\nabla^2\Bw}{L^2(B_1^+)}
                  +\norm{\nabla\pi}{L^2(B_1^+)}.
\]
The cut-off and coefficient bounds imply
\begin{equation}
 \norm{\Bw}{H^1(B_1^+)}
 +\norm{\Bv}{H^1(B_1^+)}
 \le C_\ell M,
 \qquad
 \norm{G}{H^1(B_1^+)}
 \le C_\ell M.
 \label{eq:lower-localized}
\end{equation}
The constants denoted by $C_\ell$ may also depend on the
fixed geometric bounds, but not on $x^*$ or $r$.

Since $\pi=\varphi\pt$ on $Q$ and is extended by zero
to $B_1^+\setminus Q$, the condition $0\le\varphi\le1$
and \eqref{eq:flat-low-norms} yield
\begin{equation}
 \norm{\pi-\avg{\pi}{B_1^+}}{L^2(B_1^+)}
 \le \norm{\pi}{L^2(B_1^+)}
 = \norm{\varphi\pt}{L^2(Q)}
 \le P_0
 \le C.
 \label{eq:localized-pressure-bound}
\end{equation}

We first estimate the boundary datum. The trace theorem and
multiplication by a $W^{1,\infty}$ coefficient give
\begin{align}
& \norm{b_rv_1}{H^{1/2}(I_1)}
     \le C\norm{b_r}{W^{1,\infty}}\norm{\Bv}{H^1}
       \le C_\ell\alpha M,\label{eq:new-boundary-term}\\
 &\norm{cv_1}{H^{1/2}(I_1)}
       +\norm{E_\varphi}{H^{1/2}(I_1)}
     \le C_\ell M.\label{eq:lower-boundary-terms}
\end{align}
For the term involving a tangential derivative, extend $a$ normally
as a function of $s_1$. Since
\[
 a=\frac{q}{J^2},\qquad
 \|a\|_{L^\infty}\le C\eta,
 \qquad \|a'\|_{L^\infty}\le C_\ell,
\]
only the coefficient multiplying the highest derivative is small;
derivatives of the coefficient contribute lower-order terms. By
$H^1$ trace estimate, it holds that
\begin{align}
 \norm{a\partial_1v_1}{H^{1/2}(I_1)}
 &\le C\norm{a\partial_1v_1}{H^1(B_1^+)}\notag\\
 &\le C\eta\norm{\nabla^2\Bv}{L^2}
       +C_\ell\norm{\Bv}{H^1}.
 \label{eq:tangential-boundary-term}
\end{align}
Consequently,
\begin{equation}
 \norm{h}{H^{1/2}(I_1)}
 \le C\eta\norm{\nabla^2\Bv}{L^2}
             +C_\ell(1+\alpha)M.
 \label{eq:h-estimate}
\end{equation}

For the nonlinear terms, the Gagliardo--Nirenberg interpolation
inequality (see \cite{Fiorenza}), applied after extension from the
fixed half-ball, gives
\[
 \norm{\Bw}{L^\infty}
 \le C\norm{\nabla^2\Bw}{L^2}^{1/2}\norm{\Bw}{L^2}^{1/2}
       +C\norm{\Bw}{L^2}.
\]
Since $\varphi\ut=\Bw$ and $\eta\le1$, it follows that, for every
$\ep>0$,
\begin{align}
 &\norm{\varphi(-\ut\cdot\nabla\ut
                            +q\widehat{u}_1\partial_2\ut)}{L^2}
 \notag\\
 &\qquad\le C\norm{\Bw}{L^\infty}\norm{\nabla\ut}{L^2(Q)}
       \le\ep X+C_\ep M^3+CM^2.
 \label{eq:nonlinear-estimate}
\end{align}
We record separately the remaining contributions in
\eqref{eq:F-expanded},  where the small geometric factor is
used. The terms containing the pressure gradient or second derivatives
of $\Bw$ satisfy
\begin{equation}
 \|q\nabla\pi\|_{L^2}
 +\|q\nabla^2\Bw\|_{L^2}
 +\|q^2\nabla^2\Bw\|_{L^2}
 \le C\eta X.
 \label{eq:F-high-geometric}
\end{equation}
Because $q'$ and $q''$ are uniformly bounded on the fixed chart, while
$\Bw$ and its first derivatives are controlled by
\eqref{eq:lower-localized},
\begin{equation}
 \|q'\nabla\Bw\|_{L^2}
 +\|q''\Bw\|_{L^2}
 \le C_\ell M.
 \label{eq:F-lower-geometric}
\end{equation}
The cut-off terms involving $\ut,\nabla\ut$, and $\pt$ are bounded by
$C_\ell(M+P_0)$. Together with
\eqref{eq:nonlinear-estimate}, these bounds give
\begin{equation}
 \norm{\BF}{L^2}
 \le C_\ell(M+P_0)+C_\ep(M^2+M^3)
                    +\bigl(C\eta+\ep\bigr)X.
 \label{eq:F-estimate}
\end{equation}
The local constant-parameter Stokes estimate of Lemma~\ref{lem:local-stokes}, applied
to \eqref{eq:localized-boundary}, gives
\begin{align}
 &\norm{\nabla^2\Bv}{L^2(B_{1/2}^+)}
       +\norm{\nabla\pi}{L^2(B_{1/2}^+)}\notag\\
 &\quad\le C_*\bigl(
       \norm{\Bv}{H^1(B_1^+)}
       +\norm{\pi-\avg{\pi}{B_1^+}}{L^2(B_1^+)}
       +\norm{\BF}{L^2(B_1^+)}
       +\norm{G}{H^1(B_1^+)}
       +\norm{h}{H^{1/2}(I_1)}\bigr),
 \label{eq:apply-stokes}
\end{align}
where $C_*$ is independent of $\lambda=\alpha r$.
Because $\Bv,\Bw,\pi$ are supported strictly inside $B_{1/2}$ relative
to the half-plane, their derivative norms on $B_{1/2}^+$ agree with
those on $B_1^+$. This support property permits absorption on the
same domain.

The product rule also gives
\begin{equation}
 \norm{\nabla^2\wt}{L^2}
       \le C\eta\norm{\nabla^2\Bw}{L^2}+C_\ell M.
 \label{eq:correction-hessian}
\end{equation}
Using $\Bw=\Bv+\wt$, estimates
\eqref{eq:lower-localized}--\eqref{eq:correction-hessian}
therefore imply
\begin{equation}
 X\le C_{\ell,\alpha,\ep}
           (M+P_0+M^2+M^3)+C_0(\eta+\ep)X.
 \label{eq:absorption}
\end{equation}
The constant $C_0$ is independent of $\ell$, $x^*$, and $r$;
coefficient and cut-off derivatives enter only the lower-order constant.
Choose $\varepsilon>0$ with $C_0\varepsilon\le1/4$, and then fix
$\ell$ sufficiently small, subject to the preceding geometric
restrictions, that $C_0\eta\le1/4$; this is possible because
$\eta\le C_g\ell d_*^{-1/2}$. Since $X$ is finite by local regularity,
absorption in \eqref{eq:absorption}, together with
\eqref{eq:flat-low-norms}, yields
\begin{equation}
 \norm{\nabla^2\Bw}{L^2(B_1^+)}
       +\norm{\nabla\pi}{L^2(B_1^+)}\le C.
 \label{eq:uniform-high-order}
\end{equation}
The constant $C$ is independent of $x^*$ and $r$.

\medskip
\noindent\textit{Step 6. Returning to the original variables.}

Since $\varphi=1$ on $Q'$, we have $\Bw=\ut$ there.
The bounds \eqref{eq:lower-localized} and
\eqref{eq:uniform-high-order}, together with
the embedding $H^2(B_1^+)\hookrightarrow L^\infty(B_1^+)$,
give
\[
 \norm{\ut}{L^\infty(Q')}
 \le \norm{\Bw}{L^\infty(B_1^+)}
 \le C.
\]
Since $y^*\in\omega'=\BPsi(Q')$, the definitions of the
transformed variables and $r=f(x_1^*)$ yield
\begin{equation}
 |\Bu(x^*)|
 =\frac{1}{r}|\Bu^r(z^*)|
 =\frac{1}{r}|\Bu_R(y^*)|
 \le \frac{1}{r}\norm{\ut}{L^\infty(Q')}
 \le \frac{C}{f(x_1^*)}.
 \label{eq:boundary-pointwise}
\end{equation}
The constant is independent of $x^*$ and $r$.
The same argument applies near the lower wall after
choosing the local coordinates so that the fluid lies
above the boundary.
Using the corresponding local energy estimate at the
left outlet gives the same bound there.
This proves the proposition.
\end{proof}

Combining the interior estimate with $\delta=\delta_0$
and Proposition~\ref{prop:boundary-decay}, we obtain
\[
 |\Bu(x)|\le \frac{C}{f(x_1)},
 \qquad x\in\Omega.
\]
The interior bootstrap and the boundary $H^2$ estimates above give a
representative of $\Bu$ that is locally continuous on
$\overline\Omega$, so the pointwise bound extends to the channel
walls by continuity. This finishes the proof of
Theorem~\ref{intro:main}.

\appendix
\section{Proof of the local boundary estimate}
\label{app:local-stokes-proof}

We prove Lemma~\ref{lem:local-stokes}, using the notation of
\eqref{eq:local-stokes}. The argument follows the 
difference-quotient approach in the proof of
\cite[Theorem~4.5]{TACG}. We show the estimate is uniform with respect to the constant $\lambda\ge0$.
All test functions below are supported away from the semicircular boundary;
no boundary condition is required there.

\begin{proof}[Proof of Lemma~\ref{lem:local-stokes}]
Write $\partial_j=\partial/\partial s_j$ and set
\begin{align}
 \mathcal N_0&=\|\BU\|_{H^1(B_1^+)}+\|\BF\|_{L^2(B_1^+)}
       +\|G\|_{H^1(B_1^+)}+\|Y\|_{H^{1/2}(I_1)},
       \label{eq:app-data-zero}\\
 \mathcal N&=\mathcal N_0+
       \|P-(P)_{B_1^+}\|_{L^2(B_1^+)}.
       \label{eq:app-data}
\end{align}

\medskip
\noindent\textit{Step 1. Tangential derivatives of the velocity.}
For $0<|\kappa|<1/32$, put
\[
 \delta_{\kappa} Z(s)=\frac{Z(s+\kappa \Be_1)-Z(s)}{\kappa}.
\]
Choose $3/4\le a<b\le7/8$ and $\chi\in C_c^\infty(B_b)$ with
$0\le\chi\le1$, $\chi=1$ on $B_a$, and
$|\nabla\chi|\le C(b-a)^{-1}$.
The field $\chi^2\delta_{\kappa} \BU$ has zero normal trace on $I_b$
and vanishes near the semicircular boundary of $B_b^+$. Therefore, its divergence $\chi^2\delta_{\kappa} G+2\chi\nabla\chi\cdot\delta_{\kappa} \BU$
 has zero integral. The Bogovski\u{i} map
\cite{Bo,Ga} gives
$\BZ_{\kappa}\in H_0^1(B_b^+;\mathbb R^2)$ such that
\begin{align}
 \operatorname{div}\BZ_{\kappa}
 &=\chi^2\delta_{\kappa} G+2\chi\nabla\chi\cdot\delta_{\kappa} \BU,
       \label{eq:app-div-correction}\\
 \|\nabla \BZ_{\kappa}\|_{L^2(B_b^+)}
 &\le C\left(\|G\|_{H^1(B_1^+)}
       +(b-a)^{-1}\|\BU\|_{H^1(B_1^+)}\right),
       \label{eq:app-correction-bound}
\end{align}
where the constant $C$ is uniform for $b\in(3/4,7/8]$. 

Extend $\BZ_{\kappa}$ and $\chi^2\delta_{\kappa} \BU$ by zero within the upper half-plane
outside $B_b^+$, and set $\BPhi_{\kappa}=\chi^2\delta_{\kappa} \BU-\BZ_{\kappa}$.
Then $\operatorname{div}\BPhi_{\kappa}=0$, its normal trace vanishes,
and $-\delta_{-\kappa}\BPhi_{\kappa}$ is an admissible test function.
Discrete integration by parts in $s_1$ gives
\begin{align}
 &\int_{B_b^+}\nabla\delta_{\kappa} \BU:\nabla\BPhi_{\kappa}\,ds
       +2\lambda\int_{I_b}\delta_{\kappa}U_1\Phi_{\kappa,1}\,ds_1
       \notag\\
 &\hspace{10mm}=-\int_{B_1^+}\BF\cdot\delta_{-\kappa}\BPhi_{\kappa}\,ds
       +2\langle\delta_{\kappa} Y,\Phi_{\kappa,1}\rangle.
       \label{eq:app-difference-identity}
\end{align}
Boundary pairings are taken in
$H^{-1/2}(\R)\times H^{1/2}(\R)$,
using a bounded $H^{1/2}$ extension of $Y$
and the zero extension of the test trace.
They are independent of the chosen extension,
since the test trace and its translates are supported in $I_1$.

Since $\lambda$ is constant, no coefficient difference occurs.
The zero trace of $\BZ_{\kappa}$ gives the exact identity
\begin{equation}
 2\lambda\int_{I_b}\delta_{\kappa} U_1\Phi_{\kappa,1}\,ds_1
 =2\lambda\int_{I_b}\chi^2|\delta_{\kappa} U_1|^2\,ds_1\ge0.
 \label{eq:app-friction-positive}
\end{equation}
The first term expands as
\begin{align}
 \int_{B_b^+}\nabla\delta_{\kappa} \BU:\nabla\BPhi_{\kappa}\,ds
 &=\int_{B_b^+}\chi^2|\nabla\delta_{\kappa} \BU|^2\,ds\notag\\
 &\quad+2\int_{B_b^+}\chi\nabla\delta_{\kappa} \BU:
              (\delta_{\kappa} \BU\otimes\nabla\chi)\,ds\notag\\
 &\quad-\int_{B_b^+}\nabla\delta_{\kappa} \BU:\nabla \BZ_{\kappa}\,ds.
 \label{eq:app-diffusion}
\end{align}
For every $\varepsilon>0$, the absolute values of the last two terms
are bounded  by
\begin{equation}
 \varepsilon\|\nabla\delta_{\kappa} \BU\|_{L^2(B_b^+)}^2
       +C_\varepsilon(b-a)^{-2}\mathcal N_0^2.
 \label{eq:app-bulk-errors}
\end{equation}
Here we used \eqref{eq:app-correction-bound} and
$\|\delta_{\kappa} \BU\|_{L^2(B_b^+)}\le C\|\BU\|_{H^1(B_1^+)}$.
In particular, the cut-off factor remains $(b-a)^{-2}$ until the
iteration has been completed.

The force term satisfies 
\begin{align}
 \left|\int_{B_1^+}\BF\cdot\delta_{-\kappa}\BPhi_{\kappa}\,ds\right|
 &\le C\|\BF\|_{L^2(B_1^+)}\|\nabla\BPhi_{\kappa}\|_{L^2(B_b^+)}\notag\\
 &\le\varepsilon\|\nabla\delta_{\kappa} \BU\|_{L^2(B_b^+)}^2
       +C_\varepsilon(b-a)^{-2}\mathcal N_0^2.
 \label{eq:app-force}
\end{align}
For the boundary term, the difference-quotient and trace bounds give
\begin{align}
 \|\delta_{\kappa} Y\|_{H^{-1/2}(\mathbb R)}
 &\le C\|Y\|_{H^{1/2}(I_1)},\notag\\
 \|\chi^2\delta_{\kappa} U_1\|_{H^{1/2}(\mathbb R)}
 &\le C\left(\|\chi\nabla\delta_{\kappa} \BU\|_{L^2(B_b^+)}
       +(b-a)^{-1}\|\BU\|_{H^1(B_1^+)}\right),
 \label{eq:app-trace}
\end{align}
where the first line uses the fixed extension of $Y$ specified above.
Since $\Phi_{\kappa,1}=\chi^2\delta_{\kappa} U_1$ on $I_b$, we obtain
\begin{equation}
 2|\langle\delta_{\kappa} Y,\Phi_{\kappa,1}\rangle|
 \le\varepsilon\|\nabla\delta_{\kappa} \BU\|_{L^2(B_b^+)}^2
       +C_\varepsilon(b-a)^{-2}\mathcal N_0^2.
 \label{eq:app-boundary-data}
\end{equation}
Combining these estimates and discarding
\eqref{eq:app-friction-positive}, we find, for every fixed $0<\theta<1$,
\begin{equation}
 E_{\kappa}(a)\le\theta E_{\kappa}(b)+C_\theta(b-a)^{-2}\mathcal N_0^2,
 \qquad E_{\kappa}(t)=\|\nabla\delta_{\kappa} \BU\|_{L^2(B_t^+)}^2.
 \label{eq:app-iteration}
\end{equation}
All constants here are independent of $\lambda$.

To close the estimate, take $t_j=7/8-2^{-j}/8$ and $\theta=1/8$.
Iteration gives
\[
 E_{\kappa}(3/4)\le\theta^m E_{\kappa}(t_m)
       +C\mathcal N_0^2\sum_{j=0}^{m-1}(4\theta)^j.
\]
For fixed $\kappa\ne0$, $E_{\kappa}(t_m)\le C|\kappa|^{-2}\|\BU\|_{H^1(B_1^+)}^2$.
Letting $m\to\infty$ yields a bound independent of $\kappa$.
The difference-quotient characterization of weak derivatives implies
\begin{equation}
 \|\partial_1\nabla \BU\|_{L^2(B_{3/4}^+)}\le C\mathcal N_0.
 \label{eq:app-tangential}
\end{equation}

\medskip
\noindent\textit{Step 2. The tangential pressure derivative.}
Put $\omega=B_{2/3}^+$. Given $q\in L^2(\omega)$ with zero mean,
choose $\Bvphi\in H_0^1(\omega;\mathbb R^2)$ satisfying
\begin{equation}
 \operatorname{div}\Bvphi=q,\qquad
 \|\nabla\Bvphi\|_{L^2(\omega)}\le C\|q\|_{L^2(\omega)}.
 \label{eq:app-pressure-lifting}
\end{equation}
Extend $\Bvphi$ by zero in the upper half-plane. Its tangential
translates still have zero trace on $I_1$. Testing the full
velocity--pressure weak identity with $-\delta_{-{\ka}}\Bvphi$
eliminates both boundary terms and gives
\begin{equation}
 \int_\omega\delta_{\ka} P\,q\,ds
 =\int_\omega\nabla\delta_{\ka} \BU:\nabla\Bvphi\,ds
       +\int_{B_1^+}\BF\cdot\delta_{-\ka}\Bvphi\,ds.
 \label{eq:app-pressure-difference}
\end{equation}
We use the full weak identity because $\Bvphi$ need not be solenoidal.
The translated copies of $\omega$ lie in $B_{3/4}^+$.
Thus \eqref{eq:app-tangential} and \eqref{eq:app-pressure-lifting} imply
\begin{equation}
 \|\delta_{\ka} P-c_{\ka}\|_{L^2(\omega)}\le C\mathcal N_0,
 \qquad c_{\ka}=(\delta_{\ka} P)_\omega.
 \label{eq:app-pressure-oscillation}
\end{equation}
Only the $L^2$ function $\delta_{\ka} P$ has been used; no regularity of
$\partial_1P$ is assumed at this stage.

Fix $\psi\in C_c^\infty(\omega)$ with $\int_\omega\psi\,ds=1$.
To control the mean, discrete integration by parts yields
\begin{align}
 c_{\ka}&=\int_\omega\delta_{\ka} P\,\psi\,ds
       -\int_\omega(\delta_{\ka} P-c_{\ka})\psi\,ds\notag\\
 &=-\int_{B_1^+}\bigl(P-(P)_{B_1^+}\bigr)
                  \delta_{-\ka}\psi\,ds
       -\int_\omega(\delta_{\ka} P-c_{\ka})\psi\,ds,
 \label{eq:app-pressure-mean}
\end{align}
where  we have used $\int\delta_{-\ka}\psi=0$. Since $\psi$ is fixed,
\eqref{eq:app-pressure-oscillation} implies $|c_{\ka}|\le C\mathcal N$.
Consequently, $\|\delta_{\ka} P\|_{L^2(\omega)}\le C\mathcal N$
uniformly for sufficiently small $|\ka|$. Passing to the limit gives
\begin{equation}
 \|\partial_1P\|_{L^2(\omega)}\le C\mathcal N.
 \label{eq:app-pressure-tangential}
\end{equation}
The pressure norm on the right is exactly the lower-order term
retained in Lemma~\ref{lem:local-stokes}.

\medskip
\noindent\textit{Step 3. Normal derivatives and conclusion.}
The divergence and momentum equations in \eqref{eq:local-stokes}
give the distributional identities
\begin{align}
 \partial_2^2U_2&=\partial_2G-\partial_1\partial_2U_1,
       \label{eq:app-normal-divergence}\\
 \partial_2^2U_1&=-\partial_1^2U_1+\partial_1P-F_1,
       \label{eq:app-normal-velocity}\\
 \partial_2P&=F_2+\partial_1^2U_2+\partial_2^2U_2.
       \label{eq:app-normal-pressure}
\end{align}
In particular, the nonzero divergence datum contributes
$\partial_2G$ in \eqref{eq:app-normal-divergence}.
Every term on the right of these identities belongs to $L^2(\omega)$
by \eqref{eq:app-tangential}, \eqref{eq:app-pressure-tangential}
and the assumptions on $\BF$ and $G$. Hence
\[
 \|\nabla^2\BU\|_{L^2(\omega)}+\|\nabla P\|_{L^2(\omega)}
       \le C\mathcal N.
\]
Restricting to $B_{1/2}^+$ and adding the lower-order velocity norm
yields the estimate in Lemma~\ref{lem:local-stokes}.
Since $P\in L^2(B_1^+)$, this also gives $P\in H^1(B_{1/2}^+)$.
Finally, $\lambda$ occurs only in the nonnegative term
\eqref{eq:app-friction-positive}, and the pressure tests have zero
boundary trace. The constant is therefore independent of every
constant $\lambda\ge0$.
\end{proof}

%%%%%%%%%%%%%%%%%%%%%%%%%%%%%%%%%%%%%%%%%%%%%%%%%%%%%%%%%%%%%%%%%%%%
\medskip

{\bf Acknowledgements.} 
H. Li was supported by the Postgraduate Research \& Practice Innovation Program of Jiangsu Province (Grant No.~5832001826).  J. Han was supported by the start-up funds for doctoral research at Anhui Normal University (Grant No.~762610).
The authors thank Professor Yun Wang for helpful discussions and suggestions.

	\medskip
	{\bf Statements and declarations.} The authors have no relevant financial or non-financial interests to disclose. Data sharing is not applicable to this article as no datasets were generated or analyzed during the current study.

\end{document}